\newtheorem{theorem}{Theorem}[section]
\newtheorem{lemma}[theorem]{Lemma}
\newtheorem{cor}[theorem]{Corollary}
\theoremstyle{definition}
\newtheorem{proposition}[theorem]{Proposition}
\theoremstyle{remark}
\numberwithin{equation}{section}
   \newcommand{\vstrut}{\rule{0in}{.08in}}
    \newcommand{\vstrutb}{\rule{0in}{.12in}}
  \renewcommand{\Re}{\mbox{\rm Re}}
  \renewcommand{\Im}{\mbox{\rm Im}}
    \newcommand{\st}{^{\textstyle {\ast }}}
   \newcommand{\ph}{\mbox{$\varphi$}}
   \newcommand{\tpsi}{\tilde{\psi}}
    \newcommand{\ran}{\mbox{ran}}
    \renewcommand{\phi}{\varphi}
   \newcommand{\Ht}{\mbox{$H^{2}$}}
   \newcommand{\Hi}{\mbox{$H^{\infty}$}}
   \newcommand{\D}{\mbox{$\mathbb{D}$}}
   \newcommand{\HP}{{\mathbb{H}}}
   \newcommand{\C}{\mbox{$C_{\varphi}$}}
   \newcommand{\CP}{\mathbb{C}}
    \newcommand{\E}{\mathbb{E}}
   \newcommand{\Cs}{\mbox{$C_{\varphi}\st$}}
 	\newfont{\caps}{cmcsc9}  
 	\newfont{\jour}{cmti9}  
\theoremstyle{remark}
\definecolor{green}{rgb}{0,0.5,0}
\definecolor{dkgreen}{rgb}{0,0.6,0}
\definecolor{gray}{rgb}{0.5,0.5,0.5}
\definecolor{mauve}{rgb}{0.58,0,0.82}
\definecolor{purple}{rgb}{0.58,0,0.62}
\renewcommand{\phi}{\varphi}
\renewcommand{\Re}{\mbox{Re\,}}
\renewcommand{\Im}{\mbox{Im\,}}
\def\and{{\quad\text{and}\quad}}
\begin{document}

\title{Posinormal Composition Operators on $\Ht$}
\author{Paul S. Bourdon}
\address{Department of Mathematics\\  University of Virginia\\ Charlottesville, VA 22903}
\email{psb7p@virginia.edu}
\author{Derek Thompson}
\address{Department of Mathematics, Taylor University,  Upland, IN 46989}
\email{theycallmedt@gmail.com}

\begin{abstract}
A bounded linear operator $A$ on a Hilbert space is \textit{posinormal} if there exists a positive operator $P$ such that $AA^{*} = A^{*}PA$.  Posinormality of $A$ is equivalent to the inclusion of the range of $A$ in the range of its adjoint $A^*$.  Every hyponormal operator is posinormal, as is every invertible operator.    We characterize both the posinormal and coposinormal composition operators $C_\phi$ on the Hardy space $H^2$ of the open unit disk $\D$ when $\phi$ is a linear-fractional selfmap of $\D$.  Our work reveals that there are composition operators that are both posinormal and coposinormal  yet have powers that fail to be posinormal.  
\end{abstract}
\subjclass[2010]{Primary 47B20, 47B33; Secondary 47B35}
\keywords{hyponormal operator, posinormal operator, composition operator, Hardy space}

\maketitle




\section{Introduction}

Let \Ht\ denote the classical \textit{Hardy space}, the Hilbert space of analytic functions $\displaystyle f(z) = \sum_{n=0}^{\infty}a_n z^n$ on the open unit disk \D\ such that $$\|f\|^{2}=\sum_{n=0}^{\infty}|a_n|^{2}<\infty.$$  Every analytic selfmap $\phi$  of $\D$ induces a \textit{composition operator} \C\ on \Ht\ given by $\C f = f \circ \ph$. Littlewood \cite{Lit} proved $C_\phi$ preserves $H^2$; and thus, by the closed-graph theorem, $C_\phi:H^2\rightarrow H^2$ is a bounded linear operator.   The function-theoretic properties of the \textit{symbol} \ph\ of the associated composition operator are deeply connected to the operator-theoretic properties of \C.  We show how the posinormality of $C_\phi$ relates to the function-theoretic properties of $\phi$ and its ``adjoint'' selfmap $\sigma$ of  $\D$.  

Composition operators on \Ht\ have been studied extensively---see \cite{cowen1995composition} and \cite{shapirocomposition}. In particular, composition operators induced by linear-fractional selfmaps of $\D$ constitute a much studied sub-collection of composition operators. One reason for this is that  when $\phi$ is linear fractional, the adjoint operator  $\Cs$ has a useful description as a product of a Toeplitz operator, a composition operator, and an adjoint Toeplitz operator. When $\psi \in \Hi$, the multiplier space of \Ht\ (bounded analytic functions on \D), the \textit{Toeplitz operator} with symbol $\psi$ is simply the multiplication operator defined by $T_{\psi} f = \psi f$. In this paper, $\psi$ always belongs to \Hi, so this definition will suffice; for more on Toeplitz operators,  see \cite[Chapter 7]{douglas1998banach}. Here we present Cowen's adjoint formula:

\begin{theorem}[Cowen's adjoint formula \cite{Cow}]\label{adjformula}  If $\ph(z) = \cfrac{az+b}{cz+d}$ is an analytic selfmap of $\D$, then on $\Ht$, $\Cs = T_g C_{\sigma} T_{h}^{*}$, where $g(z) = 1/(-\overline{b}z+\overline{d})$ and $h(z) = cz + d$ are in $H^\infty$, and $\sigma(z) = (\overline{a}z-\overline{c})/(-\overline{b}z+\overline{d})$, is an analytic selfmap of $\D$.

\end{theorem}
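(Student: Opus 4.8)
The plan is to verify the operator identity on the reproducing kernels of $\Ht$, which span a dense subspace, and then invoke boundedness of both sides to upgrade this to equality of operators. Recall that $\Ht$ is a reproducing kernel Hilbert space with kernel $K_w(z) = 1/(1-\overline{w}z)$, so that $\langle f, K_w\rangle = f(w)$ for every $f \in \Ht$ and $w \in \D$, and $\{K_w : w \in \D\}$ has dense linear span (if $f \perp K_w$ for all $w$ then $f \equiv 0$). Two one-line computations drive the argument. First, for any selfmap $\phi$ the adjoint sends kernels to kernels: from $\langle f, C_\phi^* K_w\rangle = \langle C_\phi f, K_w\rangle = f(\phi(w)) = \langle f, K_{\phi(w)}\rangle$ for all $f$, we read off $C_\phi^* K_w = K_{\phi(w)}$. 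Second, for $\psi \in \Hi$ we have $T_\psi^* K_w = \overline{\psi(w)}\,K_w$, by the analogous check using $\langle \psi f, K_w\rangle = \psi(w)f(w)$.

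With these facts, I would compute the action of $T_g C_\sigma T_h^*$ on an arbitrary kernel $K_w$ and show it produces $K_{\phi(w)}$. Applying the Toeplitz identity gives $T_h^* K_w = \overline{h(w)}\,K_w$ with $\overline{h(w)} = \overline{c}\,\overline{w}+\overline{d}$. Composing with $\sigma$ yields $(C_\sigma K_w)(z) = 1/(1-\overline{w}\sigma(z))$, and writing $\sigma(z) = (\overline{a}z-\overline{c})/(-\overline{b}z+\overline{d})$ over a common denominator turns this into
\[
(C_\sigma K_w)(z) = \frac{-\overline{b}z+\overline{d}}{z(-\overline{b}-\overline{a}\,\overline{w}) + (\overline{d}+\overline{c}\,\overline{w})}.
\]
Multiplying by $g(z) = 1/(-\overline{b}z+\overline{d})$ cancels the factor $-\overline{b}z+\overline{d}$, leaving $T_g C_\sigma T_h^* K_w(z) = \overline{h(w)}\big/\big(z(-\overline{b}-\overline{a}\,\overline{w}) + (\overline{d}+\overline{c}\,\overline{w})\big)$.

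The key algebraic step is to recognize the right-hand side as $K_{\phi(w)}$. Using $\overline{\phi(w)} = (\overline{a}\,\overline{w}+\overline{b})/(\overline{c}\,\overline{w}+\overline{d})$ together with $\overline{h(w)} = \overline{c}\,\overline{w}+\overline{d}$, the product telescopes: $\overline{h(w)}\,\overline{\phi(w)} = \overline{a}\,\overline{w}+\overline{b}$, whence $\overline{h(w)}\big(1-\overline{\phi(w)}z\big) = (\overline{c}\,\overline{w}+\overline{d}) - (\overline{a}\,\overline{w}+\overline{b})z$. This matches the denominator computed above coefficient by coefficient, so $T_g C_\sigma T_h^* K_w = K_{\phi(w)} = C_\phi^* K_w$ for every $w \in \D$. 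Since both operators are bounded and agree on a dense set, they coincide, which is the asserted formula.

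Finally, two points must be settled before the computation is legitimate, and this is the only place where the hypothesis that $\phi$ is a selfmap of $\D$ is genuinely used: I must check that $g \in \Hi$ and that $\sigma$ is a selfmap of $\D$, so that $T_g$ and $C_\sigma$ are bounded on $\Ht$. For $g$, the only zero of $-\overline{b}z+\overline{d}$ lies at $\overline{d}/\overline{b}$, of modulus $|d|/|b| > 1$, because $\phi(0) = b/d \in \D$ forces $|b| < |d|$; hence $g$ is analytic and bounded on $\D$. That $\sigma$ maps $\D$ into $\D$ is the genuinely function-theoretic ingredient, which I would deduce from the coefficient relations imposed by $\phi(\D)\subseteq\D$. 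I expect this selfmap verification, rather than the kernel calculation, to be the main obstacle, since the computation itself is pure bookkeeping once the reproducing-kernel identities are in place.
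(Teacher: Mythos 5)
Your reproducing-kernel computation is correct and is essentially the proof the paper indicates: the paper records the companion identity $(C_\phi K_\alpha)(z) = \overline{g(\alpha)}\,h(z)K_{\sigma(\alpha)}(z)$ and remarks that this computation proves the formula, and verifying $T_g C_\sigma T_h^* K_w = K_{\phi(w)}$ is the same calculation read through the adjoint; your algebra (the identities $C_\phi^*K_w = K_{\phi(w)}$ and $T_h^*K_w = \overline{h(w)}K_w$, the telescoping of denominators, and the bound $|d|/|b|>1$ giving $g\in\Hi$) checks out. The genuine gap is the step you flag and then defer: the assertion that $\sigma$ is an analytic selfmap of $\D$. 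This is not a peripheral detail. It is part of the statement being proved, and without it $C_\sigma$ is not known to be bounded on $\Ht$, so $T_gC_\sigma T_h^*$ is not a well-defined bounded operator and your ``bounded operators agreeing on a dense set coincide'' step has nothing to apply to; as written the identity is only established formally.

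Two standard ways to close the gap. (i) Set $\phi_e(z) = 1/\overline{\phi(1/\bar z)}$; since $\phi(\D)\subseteq\D$, $\phi_e$ maps the exterior $\E=\CP_\infty\setminus\D^-$ into itself, hence $\phi_e^{-1}$ maps $\D^-$ into $\D^-$ (and $\D$ into $\D$, being an open map), and a direct computation shows $\phi_e^{-1}=\sigma$; this is exactly the device the paper uses later in the proof of Theorem 2.2. (ii) Run your kernel computation in the opposite direction, which involves no operator with symbol $\sigma$: one checks the function identity $(C_\phi K_\alpha)(z) = \overline{g(\alpha)}\,h(z)K_{\sigma(\alpha)}(z)$ directly. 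Since $C_\phi K_\alpha\in\Ht$, $g$ never vanishes on $\D$, and $1/h\in\Hi$ (the pole $-d/c$ of $\phi$ lies outside $\D^-$ because $\phi$ is bounded on $\D$), it follows that $K_{\sigma(\alpha)} = (C_\phi K_\alpha)/(\overline{g(\alpha)}\,h)$ belongs to $\Ht$, which forces $\sigma(\alpha)\in\D$. Either route converts your sketch into a complete proof.
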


Following MacCluer and Weir \cite{MW}, we call the selfmap $\sigma$ of $\D$ appearing in Cowen's formula for $C_\phi^*$ {\it the adjoint of $\phi$}.    The functions, $g$, $h$, and $\sigma$ of Cowen's adjoint formula arise naturally through the relationship between composition operators on $H^2$ and the reproducing kernels for $H^2$, which are given by $K_\alpha(z) = \frac{1}{1-\bar{\alpha} z}$, $\alpha\in D$.  Letting $\langle \cdot, \cdot \rangle$ denote the inner product of $H^2$, we have $\langle f, K_\alpha \rangle = f(\alpha)$ for every $f \in \Ht$ and $\alpha \in \D$.  For each analytic selfmap $\phi$ of $\D$, we have $C^*K_\alpha = K_{\phi(\alpha)}$.  When $\phi$ is linear fractional, we have 
$$
(C_\phi K_\alpha)(z) = \overline{g(\alpha)}h(z)K_{\sigma(\alpha)}(z),
$$
 where $h, g$ and $\sigma$  are the linear-fractional transformations associated  with Cowen's adjoint formula; in fact, this computation provides a proof of the formula.  
 
  Throughout this paper, we assume all linear-fractional functions are defined on the extended complex plane $\CP_\infty$ (mapping $\CP_\infty$ onto $\CP_\infty$).

Fixed-point theory also plays a large role in the study of composition operators and does so in our study of their posinormality as well. The Denjoy-Wolff theorem guarantees that every analytic selfmap of \D\  (except elliptic automorphisms) will have a unique attracting fixed point $w$ in the closure $\D^-$ of $\D$, and the iterate sequence of \ph\ converges to that point uniformly on compact subsets of \D. We classify the linear-fractional selfmaps of $\D$ as follows:  $\phi$ is of
\begin{itemize}
\item {\it dilation type} if $\phi$ has a fixed point in $\D$,
\item {\it hyperbolic type} if $\phi$ has Denjoy-Wolff point $w$ on $\partial \D$ with $\phi'(w) < 1$, and
\item {\it parabolic type} if $\phi$  has Denjoy-Wolff point $w$ on $\partial \D$ with $\phi'(w) = 1$.
\end{itemize}
A nonconstant selfmap $\phi$ of dilation type will have a second fixed point either on the unit circle or in  the complement of  $\D^-$.  Similarly, a hyperbolic-type selfmap will have a second fixed point in the complement of $\D$ (and will be an automorphism of $\D$ precisely when the second fixed point is on the unit circle).  Finally, for a parabolic selfmap $\phi$, the Denjoy-Wolff point is the unique fixed point of $\phi$ in $\CP_\infty$.

First defined by Rhaly in \cite{Rhaly}, a bounded linear operator $A$ on a Hilbert space is said to be \textit{posinormal} if there exists a positive operator $P$ such that $AA^{*} = A^{*}PA$. The operator $P$ is called an \textit{interrupter} of $A$. Posinormality is a unitary invariant; in fact, as Rhaly points out, if $V$ is an isometry (so that $V^*V = I$) and $A$ is posinormal with interrupter $P$, then $VAV^*$ is posinormal with interrupter $VPV^*$.   If $A^{*}$ is posinormal, then $A$ is \textit{coposinormal}. Based on the work of Douglas in \cite{douglas1966majorization}, Rhaly gives some extremely useful equivalent conditions for posinormality:
 
\begin{theorem}[\cite{Rhaly}]\label{posinormal}

For $A \in \mathcal{B(H)}$, the space of bounded linear operators on a Hilbert space $\mathcal{H}$, the following statements are equivalent:
\begin{enumerate}
    \item[(1)] $A$ is posinormal;
    \item[(2)]  $\ran \textrm{ } A \subseteq \ran \textrm{ } A^*$;
    \item[(3)]  $AA^{*} \leq \lambda^2 A^{*}A $ for some $\lambda \geq 0$; and
    \item[(4)]  there exists $T \in \mathcal{B(H)}$ such that $A = A^{*}T$.
\end{enumerate}
Moreover, if $A$ is posinormal, then there is a unique operator $T$, satisfying 
the properties below, for which $A = A^*T$.  
\begin{itemize}
    \item[(a)] $\| T\|^2 = \inf \{ \mu | AA^{*} \leq \mu A^{*} A \};$
    \item[(b)] $\ker A = \ker T$; and
    \item[(c)] $\ran \textrm{ } T \subseteq (\ran \textrm{ } A)^{-}$. 
\end{itemize}
\end{theorem}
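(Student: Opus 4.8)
The plan is to derive all four equivalences, together with the uniqueness clause, from Douglas's majorization theorem \cite{douglas1966majorization}. That theorem states that for $B,D\in\mathcal{B(H)}$ the conditions $\ran D\subseteq\ran B$, the majorization $DD^*\le\lambda^2 BB^*$ for some $\lambda\ge 0$, and the existence of a factorization $D=BC$ with $C\in\mathcal{B(H)}$ are mutually equivalent; moreover, when they hold there is a unique factor $C$ with $\ran C\subseteq(\ran B^*)^-$, and this $C$ satisfies $\|C\|^2=\inf\{\mu: DD^*\le\mu BB^*\}$ and $\ker C=\ker D$.

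First I would apply this with $B=A^*$ and $D=A$. Then $BB^*=A^*A$ and $B^*=A$, so the three Douglas conditions read exactly as statements $(2)$, $(3)$, and $(4)$ of the present theorem, while the uniqueness clause yields properties $(a)$, $(b)$, and $(c)$ word for word once one observes that $(\ran B^*)^-=(\ran A)^-$. Hence the core of the theorem is obtained by a direct specialization, and it remains only to splice statement $(1)$ into the chain.

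To incorporate posinormality I would argue $(4)\Rightarrow(1)$ and $(1)\Rightarrow(3)$. For the first, assume $A=A^*T$; taking adjoints gives $A^*=T^*A$, so $AA^*=A^*T\,T^*A=A^*(TT^*)A$, exhibiting $P:=TT^*\ge 0$ as an interrupter. For the second, if $AA^*=A^*PA$ with $P\ge 0$, then $P\le\|P\|I$ gives $A^*PA\le\|P\|A^*A$, hence $AA^*\le\|P\|A^*A$, which is $(3)$ with $\lambda^2=\|P\|$. Combined with the equivalences $(2)\Leftrightarrow(3)\Leftrightarrow(4)$ already in hand, this establishes the full equivalence of $(1)$--$(4)$.

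The one substantive ingredient is Douglas's theorem, and if one wants a self-contained argument the main obstacle is constructing the factor from the inequality. In the specialization above, $(3)$ says $\|A^*x\|\le\lambda\|Ax\|$ for every $x$, so the assignment $Ax\mapsto A^*x$ is well defined and bounded (by $\lambda$) on $\ran A$; extending it continuously to $(\ran A)^-$ and declaring it $0$ on $(\ran A)^{\perp}=\ker A^*$ produces $S\in\mathcal{B(H)}$ with $SA=A^*$ and $\|S\|\le\lambda$, whence $A=A^*S^*$ and $T:=S^*$ is the desired factor. Verifying that this $T$ realizes the infimum in $(a)$ and is the unique factor with range in $(\ran A)^-$ is where the care lies, but these properties follow from the standard range/kernel bookkeeping built into the construction.
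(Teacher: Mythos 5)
The paper states this result without proof, citing Rhaly, whose argument is precisely the specialization of Douglas's majorization theorem to $B=A^*$, $D=A$ that you carry out; your derivation, including the way you splice condition $(1)$ into the chain via $(4)\Rightarrow(1)$ (taking $P=TT^*$) and $(1)\Rightarrow(3)$ (using $P\le\|P\|I$), is correct. Your approach matches the intended one, so nothing further is needed.
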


Those who study hyponormal operators should find this result somewhat familiar: e.g.,  every hyponormal operator is posinormal by condition (3) with $\lambda = 1$, and a nonzero operator $A$ is hyponormal  if and only if the operator $T$ of (4) satisfying the uniqueness properties $(a)$ through $(c)$ has norm $1$.   However, many of the key results about hyponormal operators do not hold: for example, the only result known to the authors concerning the spectral behavior of a posinormal operator $A$ is that if $0$ belongs to the point spectrum of $A$, then it also belongs to the point spectrum of $A^*$; in fact, $Av = 0$ implies $A^*v = 0$ when $A$ is posinormal.   Comparatively, different eigenvalues for a hyponormal operator must have orthogonal eigenvectors, and, by Putnam's Theorem \cite{PT},  a hyponormal operator having a spectrum with zero area must be normal. In this paper, we will rely principally on function theory to determine which symbols \ph\ induce posinormal (and coposinormal) composition operators.

The following proposition provides as a corollary our first posinormality result: if $C_\phi:H^2\rightarrow H^2$ is posinormal, then $\phi$ must have a zero in $\D$. 

\begin{proposition}\label{zero}
Suppose \ph\ is a  linear-fractional selfmap of \D. The constant function $1$ is in the range of \Cs\ if and only if $\ph(\beta) = 0$ for some $\beta \in \D$.
\end{proposition}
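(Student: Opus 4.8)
The plan is to prove the two implications separately, the backward one being essentially immediate from the reproducing-kernel identity $C_\phi^* K_\alpha = K_{\phi(\alpha)}$ and the forward one requiring Cowen's formula together with a change of variables. For the backward implication, suppose $\phi(\beta) = 0$ for some $\beta \in \D$. Since $K_0(z) = 1/(1-0\cdot z)$ is the constant function $1$, we get $C_\phi^* K_\beta = K_{\phi(\beta)} = K_0 = 1$, so $1 \in \ran C_\phi^*$ at once.

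For the forward implication I would assume $1 = C_\phi^* f$ for some $f \in H^2$ and write $\phi(z) = (az+b)/(cz+d)$. Using Cowen's formula $C_\phi^* = T_g C_\sigma T_h^*$ from Theorem \ref{adjformula}, I would first note that $T_g$ is invertible: its symbol $g(z) = 1/(-\bar b z + \bar d)$ lies in $H^\infty$, and so does the reciprocal $1/g(z) = -\bar b z + \bar d$, a polynomial; hence $T_g^{-1} = T_{1/g}$. Applying $T_g^{-1}$ to $T_g C_\sigma T_h^* f = 1$ gives $(T_h^* f)\circ \sigma = 1/g$, that is, $(T_h^* f)(\sigma(z)) = -\bar b z + \bar d$ for all $z \in \D$.

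The heart of the argument is then a change of variables. Substituting $w = \sigma(z)$ and using the explicit inverse $\sigma^{-1}(w) = (\bar d w + \bar c)/(\bar b w + \bar a)$, a short computation yields $(T_h^* f)(w) = \overline{ad-bc}/(\bar b w + \bar a)$ for every $w \in \sigma(\D)$. Because $ad - bc \neq 0$, the map $\sigma$ is a nonconstant linear-fractional transformation, so $\sigma(\D)$ is a nonempty open subset of $\D$; and since $T_h^* f \in H^2$ is analytic on $\D$, I can identify the right-hand side on all of $\D$ by the identity theorem. Setting $\beta = -b/a$, the zero of $\phi$ in $\CP_\infty$, I recognize $\overline{ad-bc}/(\bar b w + \bar a)$ as the scalar multiple $(\overline{ad-bc}/\bar a)\,K_\beta(w)$ of the reproducing kernel $K_\beta(w) = 1/(1-\bar\beta w)$, whose coefficient is nonzero.

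Thus the $H^2$ function $T_h^* f$ agrees on the open set $\sigma(\D)$ with a nonzero multiple of $K_\beta$, and the main obstacle—the crux of the proof—is to convert this into information about the location of $\beta$. I would argue that such an $f$ can exist only when this right-hand side is itself the restriction of an $H^2$ function: if the pole $-\bar a/\bar b$ of $K_\beta$ lay in $\D$ (equivalently $|\beta| > 1$), then by the identity theorem the analytic function $T_h^* f$ would be forced to blow up there, a contradiction; and if $|\beta| = 1$, then $K_\beta \notin H^2$, again ruling out the equality. Hence $|\beta| < 1$, so $\beta \in \D$ and $\phi(\beta) = \phi(-b/a) = 0$, as desired. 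The degenerate configurations fall out of the same pole-location bookkeeping: when $a = 0$ the map $\phi$ has no finite zero and the right-hand side becomes $-\bar c/w$, whose pole sits at the origin inside $\D$, so $1 \notin \ran C_\phi^*$; when $b = 0$ one has $\beta = 0 \in \D$ and the right-hand side is constant, consistent with $\phi(0) = 0$.
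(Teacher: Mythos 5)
Your proof is correct and takes essentially the same route as the paper's: the reproducing-kernel identity $C_\phi^*K_\beta=K_{\phi(\beta)}=K_0=1$ for the easy direction, and, for the converse, Cowen's formula, inversion of $T_g$, the substitution $u=\sigma(z)$, the identity theorem, and the observation that the pole $-\bar a/\bar b$ can lie neither in $\D$ (the $H^2$ function $T_h^*f$ is analytic there) nor on $\partial\D$ (a nonzero multiple of $K_\beta$ with $|\beta|=1$ is not in $H^2$). The one omission is the constant case $ad-bc=0$, which the proposition still covers because the paper's standing nonconstancy assumption is introduced only after this result; the paper disposes of it separately by noting $C_\phi^*=K_bC_\sigma$ with $\sigma\equiv 0$, whereas your appeal to ``$ad-bc\neq 0$'' silently excludes it---a one-line fix.
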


\begin{proof} Suppose that $\phi(\beta) = 0$ for some $\beta \in \D$.  Then $\Cs K_\beta = K_{\phi(\beta)} = K_0 = 1.$

Conversely suppose that $1$ belongs to the range of $C_\phi^*$.  If $\phi$ is a constant function, taking all $z$ in $\D$ to the number $b$, then by Cowen's adjoint formula (or a  simple direct computation) $C_\phi^* = K_b C_\sigma$ where $\sigma$ is the constant function taking value $0$;  hence, in this case $1$ belongs to the range of $C_\phi^*$ if and only if $b= 0$, so that $\phi$ takes all values in $\D$ to $0$.  

 Now assume $1$ belongs to the range of $C_\phi^*$ and $\phi(z) = (az +b)/(cz+d)$ is nonconstant (equivalently, $ad-bc\ne 0$).  If $b=0$, then $\phi(0) = 0$.    Suppose $b\ne 0$. We show the zero $-b/a$ of $\phi$ must lie in $\D$ to complete the proof.  By Cowen's adjoint formula, we have
$$
 (C_\sigma T_h^* f)(z)= -\bar{b}z + \bar{d}, z\in \D,
$$
for some $f\in \Ht$, where $\sigma(z) = \dfrac{\bar{a}z - \bar{c}}{\vstrut -\bar{b}z + \bar{d}}$ and $h(z) =  cz +d$. Let $q= T_h^* f$, so that the displayed equation above becomes
\begin{equation}\label{1inrange}
 q(\sigma(z)) = -\bar{b}z + \bar{d}, z\in \D,
\end{equation}
where $q$ is an $\Ht$ function.   Let $u = \sigma(z)$.  Then  $z = \dfrac{\bar{c} + \bar{d}u}{\vstrut\bar{a} + \bar{b}u}$ and (\ref{1inrange}) becomes
$$
q(u) = \frac{\overline{ad} - \overline{bc}}{\vstrut\bar{a} + \bar{b}u}
$$
for all $u$ in the range of $\sigma$.  Thus, $q$ and $\psi(u) : = \dfrac{\overline{ad} - \overline{bc}}{\bar{a} + \bar{b}u}$ agree on the  nonempty open subset $\sigma(\D)$ of $\D$.  Because $q$ is analytic on $\D$ and $\psi$ is analytic on $\D$ except possibly at $-\bar{a}/\bar{b}$, the functions $q$ and $\psi$ must agree on $\D$ except possibly at $-\bar{a}/\bar{b}$.   However, $-\bar{a}/\bar{b}$ is a pole of $\psi$  ($\overline{ad} - \overline{bc}$ is nonzero) and thus $-\bar{a}/\bar{b}$ cannot lie in $\D$.  Moreover, were it to lie on $\partial \D$, so that $1= |\bar{a}/\bar{b}| = |\bar{b}/\bar{a}|$, then $q$ would have Maclaurin series 
$$
q(z) = \frac{\bar{a}\bar{d}-\bar{b}\bar{c}}{\bar{a}} \sum_{n=0}^\infty (-1)^n\left( \frac{\bar{b}}{\bar{a}}\right)^{2n}z^n
$$
and $\sum_{n=0}^\infty \left| \frac{\bar{b}}{\bar{a}}\right|^{2n} = \infty$, so that  $q$ would not belong to $H^2$, a contradiction. 
Thus, $-\bar{a}/\bar{b}$ must lie outside the closed unit disk.  Hence $\left| \frac{\bar{b}}{\bar{a}}\right| = \left| \frac{b}{a}\right| < 1$, so that the zero $-\frac{b}{a}$ of $\phi$ lies in $\D$, as desired.
\end{proof}

Because the range of every composition operator contains the constant function $1$, the preceding proposition, combined with part (2) of Theorem~\ref{posinormal}, yields the following:

\begin{cor}\label{zeroC} If $C_\phi$ is posinormal on $H^2$, then $\phi(\beta) = 0$ for some $\beta \in \D$.\end{cor}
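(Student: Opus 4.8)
The plan is to chain together the two results just established, so the argument will be short. First I would record the one genuinely new ingredient: the constant function $1$ always belongs to the range of $C_\phi$. Indeed, $C_\phi 1 = 1\circ\phi = 1$, so $1 \in \ran C_\phi$ for every analytic selfmap $\phi$. This needs no linear-fractional hypothesis and is the only input beyond the quoted results.

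Assuming now that $C_\phi$ is posinormal, I would invoke the equivalence of statements (1) and (2) in Theorem~\ref{posinormal}, which says that posinormality of $C_\phi$ is precisely the inclusion $\ran C_\phi \subseteq \ran C_\phi^*$. Combined with the observation above, this inclusion forces $1 \in \ran C_\phi^*$.

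Finally I would apply Proposition~\ref{zero}. Under the paper's standing assumption that $\phi$ is a linear-fractional selfmap of $\D$, the ``only if'' direction of that proposition converts the membership $1 \in \ran C_\phi^*$ into the conclusion that $\phi(\beta) = 0$ for some $\beta \in \D$, which is exactly what the corollary asserts.

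There is essentially no obstacle here; the corollary is a formal consequence of Proposition~\ref{zero} together with part (2) of Theorem~\ref{posinormal}. The only points requiring a moment's attention are the elementary membership $1 \in \ran C_\phi$ and the fact that Proposition~\ref{zero} is stated for linear-fractional symbols, which is precisely the setting in force throughout.
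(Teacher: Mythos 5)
Your proposal is correct and matches the paper's own argument exactly: the paper derives the corollary in one sentence by noting that $1 = C_\phi 1$ lies in the range of $C_\phi$, invoking the range inclusion of part (2) of Theorem~\ref{posinormal}, and then applying the ``only if'' direction of Proposition~\ref{zero}. Nothing is missing.
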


When $\phi$ is of parabolic type or dilation type with second fixed point on $\partial \D$, we show (see Theorems \ref{paraThm} and \ref{int_brd} below) that $\phi(\beta) = 0$ for some point in $\D$ is also sufficient for posinormality.  We remark that the stronger assumption that $C_\phi$ is hyponormal on $H^2$ yields the stronger conclusion that $\phi(0) = 0$; see \cite[Theorem 2]{cowenkrieteSubN}.    Theorem 5 of \cite{Cow} provides a characterization, which we discuss in Section 6 of this paper, of all hyponormal composition operators on $H^2$ having linear-fractional symbol.  Our work shows that the hyponormal composition operators on $H^2$ with linear-fractional symbols are precisely those that are posinormal with symbol vanishing at $0$; see Theorem~\ref{posi_hypo}.

As one would expect, characterizing posinormality for a composition operator having constant symbol is easy.

 \begin{proposition}\label{ConstantSymbol}  Suppose that $\phi$ is a constant selfmap of $\D$. Then the following are equivalent:
  \begin{itemize}  
  \item $C_\phi$ is posinormal;
  \item $C_\phi$ is coposinormal;
  \item $C_\phi$ is self-adjoint;
  \item $\phi=0$.
  \end{itemize}
  \end{proposition}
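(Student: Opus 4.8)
The plan is to exploit the fact that a constant-symbol composition operator is rank one, so that both posinormality and coposinormality can be read off directly from the one-dimensional ranges of $C_\phi$ and $C_\phi^*$ via the range criterion of Theorem~\ref{posinormal}(2). Write $\phi\equiv b$ for some $b\in\D$. First I would compute the action of $C_\phi$ explicitly: for $f\in\Ht$,
$C_\phi f = f\circ\phi = f(b)\cdot 1 = \langle f, K_b\rangle\,1$, so that $C_\phi$ is the rank-one operator with $\ran C_\phi = \CP\cdot 1$. Taking adjoints (or using $C_\phi^* K_\alpha = K_{\phi(\alpha)} = K_b$) gives $C_\phi^* f = \langle f, 1\rangle K_b = f(0)K_b$, with $\ran C_\phi^* = \CP\cdot K_b$.

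Next I would invoke Theorem~\ref{posinormal}(2): $C_\phi$ is posinormal iff $\ran C_\phi\subseteq \ran C_\phi^*$, and coposinormal iff $\ran C_\phi^*\subseteq \ran C_\phi$. Since each range is a single line in $\Ht$, both inclusions are equivalent to the linear dependence of the constant function $1$ and the kernel $K_b(z)=1/(1-\bar b z)$. Because $K_b(0)=1$, any scalar relation $1=cK_b$ forces $c=1$ and hence $K_b\equiv 1$, which (reading off the coefficient of $z$) happens precisely when $\bar b=0$, i.e.\ when $\phi=0$. This shows at once that posinormality, coposinormality, and the condition $\phi=0$ are all equivalent.

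Finally I would close the loop with self-adjointness. When $\phi=0$ we have $K_b=K_0=1$, so $C_\phi f = f(0)\cdot 1 = C_\phi^* f$; that is, $C_\phi$ is the self-adjoint orthogonal projection onto the constants. Conversely, a self-adjoint operator is normal and hence posinormal by Theorem~\ref{posinormal}(3) with $\lambda=1$, which by the above forces $\phi=0$. (Alternatively, posinormality of a constant-symbol operator already follows from Corollary~\ref{zeroC}, since a constant symbol vanishes in $\D$ only when it is identically $0$.) There is no serious obstacle here---the whole argument is a short rank-one computation---so the only point demanding any care is the clean observation that both posinormality and coposinormality collapse to the single requirement that $1$ and $K_b$ be parallel, a condition that isolates $b=0$.
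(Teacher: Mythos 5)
Your proof is correct, and its core is the same range computation the paper uses: identify $\ran C_\phi$ with the constants and $\ran C_\phi^*$ with the span of $K_b$, then apply the range-inclusion criterion of Theorem~\ref{posinormal}(2). The one place you genuinely diverge is the posinormality direction. The paper gets ``posinormal $\implies b=0$'' by citing Corollary~\ref{zeroC}, which in turn rests on the linear-fractional analysis of Proposition~\ref{zero}; you instead observe that $C_\phi$ is the rank-one operator $f\mapsto \langle f,K_b\rangle\,1$, so that both posinormality and coposinormality collapse to the single condition that $1$ and $K_b$ be parallel, handled by evaluating at $0$. This makes your argument entirely self-contained and symmetric in the two range inclusions, at the cost of nothing; the paper's route has the small advantage of reusing machinery it needs anyway for nonconstant symbols. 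Your treatment of self-adjointness (that $b=0$ gives the orthogonal projection onto the constants, and conversely that self-adjoint implies posinormal implies $b=0$) matches the paper's.
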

  \begin{proof} Let $\phi$ be a constant selfmap of $\D$, so that $\phi(z) = b$ for some $b$ in $\D$ and all $z\in \D$.    As we noted in the proof of Proposition~\ref{zero}, $C_\phi^* = K_b C_\sigma$, where $\sigma$ is the constant function taking value $0$ on $\D$. 
  
   If $C_\phi$ is posinormal, then  by Corollary~\ref{zeroC},  $\phi$ must vanish at a point in $\D$ and thus $b = 0$; i.e., $\phi$ is the zero function.  If $b = 0$, then note that $C_\phi^* = C_\phi$, so that $C_\phi$ is self-adjoint, hence normal and thus both posinormal and coposinormal.  Finally, if $C_\phi$ is coposinormal, then the range of $C_\phi^*$ must be contained in the range of $C_\phi$; in particular, we must have
$$
C_\phi^* 1 = K_{\phi(0)}  = K_b
$$
in the range of $C_\phi$. However, because the range of $C_\phi$ is the set of constant functions,  $K_{b}$ must be a constant function, so that $b = 0$.
\end{proof}

{\it For the remainder of this paper, unless otherwise indicated, we assume that all composition operators considered have nonconstant symbols.}

 Our main theorem is as follows:

\begin{theorem}[Main Theorem]
Suppose \ph\ is a (nonconstant)  linear-fractional selfmap of the unit disk \D\ with Denjoy-Wolff point $w$. \C\ is posinormal if and only if one of the following is true:

\begin{itemize}
    \item \ph\ is an automorphism of \D\ (so that \C\ is invertible),
    \item $\ph(z) = \tau \circ \alpha \tau$ where $|\alpha| < 1$, $\tau(z) = \frac{w-z}{1-\overline{w} z}$ for some $w \in \D$, and  $|w| < |\alpha|$,
    \item $|w| = 1, \ph'(w)=1$, and $\ph(\beta)=0$ for some $\beta \in \D$,
    \item \ph\ has one fixed point in \D\ and one on $\partial \D$, and $\ph(\beta) = 0$ for some $\beta \in \D$.
\end{itemize}

Furthermore, \C\ is coposinormal if and only if one of the following is true: 

\begin{itemize}
    \item  \ph\ is an automorphism \D\ (so that \C\ is invertible),
    \item the Denjoy-Wolff point $w$ of \ph\ is on $\partial \D$,
    \item $\ph(z) = \tau \circ \alpha \tau$ where $0< |\alpha| < 1$ and $\tau(z) = \frac{w-z}{1-\overline{w} z}$ for some $w \in \D$.
\end{itemize}
\end{theorem}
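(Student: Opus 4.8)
The plan is to prove both characterizations by splitting into cases according to the Denjoy--Wolff classification of \ph\ (dilation, hyperbolic, or parabolic type), treating automorphisms separately, and in each case translating the two properties into the range-inclusion criterion of Theorem~\ref{posinormal}(2): \C\ is posinormal exactly when $\ran \C \subseteq \ran \Cs$, and coposinormal exactly when $\ran \Cs \subseteq \ran \C$. Cowen's adjoint formula (Theorem~\ref{adjformula}) makes $\ran \Cs = \ran(T_g C_\sigma T_h^*)$ explicit, so throughout the comparison is between the range of a composition operator and the range of a Toeplitz--composition--Toeplitz product.

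First I would dispose of the two easy inputs. If \ph\ is an automorphism, then \C\ is invertible, so $\ran \C = \Ht = \ran \Cs$ and \C\ is both posinormal and coposinormal; this accounts for the first bullet in each list. In the other direction, Corollary~\ref{zeroC} already shows that posinormality forces $\ph(\beta)=0$ for some $\beta\in\D$, which supplies the necessity of the zero condition in the parabolic and interior-boundary bullets.

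Next I would work type by type on the non-automorphism maps. For the parabolic type and for the dilation type with second fixed point on $\partial\D$, the remaining task is sufficiency: assuming $\ph(\beta)=0$ I must produce the inclusion $\ran \C \subseteq \ran \Cs$, and this is the content I would isolate as the forthcoming Theorems~\ref{paraThm} and~\ref{int_brd}. For the dilation type with second fixed point outside $\D^-$, I would put \ph\ in the normal form $\ph = \tau\circ\alpha\tau$ with $\tau=\tau_w$ the involution exchanging $0$ and the Denjoy--Wolff point $w$; since $\ph(\beta)=0$ forces $\beta = \tau(w/\alpha)$, Corollary~\ref{zeroC}'s zero condition becomes precisely $|w|<|\alpha|$, and I would then show this inequality is also sufficient for posinormality. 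Simultaneously I would verify that hyperbolic non-automorphisms, which have no fixed point in \D, are never posinormal yet are always coposinormal, that the boundary Denjoy--Wolff maps are always coposinormal, and that the only coposinormal dilations are the normal-form maps $\ph=\tau\circ\alpha\tau$ with $0<|\alpha|<1$.

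The main obstacle I expect is the dilation-with-exterior-fixed-point analysis. The tempting move is to use $\C = C_{\tau}C_{m_\alpha}C_{\tau}$ to reduce to the diagonal (hence normal) model $C_{m_\alpha}$, but the conjugating operator $C_\tau$ is invertible without being unitary, so posinormality---a unitary, not a similarity, invariant---is not transported by this reduction; the sharp threshold $|w|<|\alpha|$ therefore cannot be read off the model. I would instead extract it by a direct computation, comparing $\ran \C$ with $\ran \Cs$ (or equivalently testing the inequality $\C\Cs \le \lambda^2 \Cs\C$ of Theorem~\ref{posinormal}(3)) using the explicit Toeplitz and composition factors of Cowen's formula. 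This same computation is where the asymmetry between the conditional posinormal criterion ($|w|<|\alpha|$) and the unconditional coposinormal conclusion should emerge.
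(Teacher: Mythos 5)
Your overall architecture (automorphisms are invertible hence trivially both properties hold; Corollary~\ref{zeroC} gives the necessity of the zero condition; then a case split by Denjoy--Wolff type) matches the paper, and you correctly identify the parabolic and interior--boundary cases as sufficiency statements to be proved separately. But there is a genuine gap in the dilation case with exterior second fixed point. You write that you would ``put $\phi$ in the normal form $\phi=\tau\circ\alpha\tau$'' and then only need to sort out the threshold $|w|<|\alpha|$. That is not a normal form for this class: $\tau_w\circ\alpha\tau_w$ has its second fixed point at the reflection $1/\overline{w}$ of $w$ across the circle, whereas a general dilation-type map with exterior fixed point is conjugate via $C_{\tau_w}$ only to $\psi(z)=az/(1-cz)$ with $c$ possibly nonzero. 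The entire content of the corresponding bullets of the Main Theorem is that posinormality (resp.\ coposinormality) \emph{forces} $c=0$, i.e.\ forces membership in the special family $\tau\circ\alpha\tau$. The paper proves this by showing that if $c\neq 0$ then $\psi\circ\sigma_1^{-1}$ (resp.\ $\sigma_1\circ\psi^{-1}$) fails to be a selfmap of $\D$, via an explicit computation that the relevant image is a disk of radius $|a|/(|a|^2-|c|^2)>1$. Your proposal assumes away exactly this step, and your fallback of ``testing $AA^*\le\lambda^2A^*A$ directly'' is not a computation you carry out or could easily carry out at the operator level.

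Relatedly, you reject the similarity reduction to the normal model $C_{\alpha z}$ on the grounds that posinormality is a unitary but not a similarity invariant. The general point is correct (the paper gives a $2\times2$ counterexample), but the paper's actual route is precisely this reduction, made legitimate by Lemma~\ref{adjauto} (the adjoint of $\tau\circ\phi\circ\tau^{-1}$ is $\tau\circ\sigma\circ\tau^{-1}$) and Proposition~\ref{simProp}: posinormality and coposinormality \emph{are} preserved under similarity by automorphic composition operators (with the zero condition added for posinormality), because the operative criteria are the purely function-theoretic ones of Theorems~\ref{phisigmainv} and~\ref{sigmaphiinv} --- that $\phi\circ\sigma^{-1}$, respectively $\sigma\circ\phi^{-1}$, be a selfmap of $\D$ --- and these are manifestly invariant under conjugation by a disk automorphism. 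Your proposal never isolates these two criteria, yet they are the engine of every case: the hyperbolic and parabolic cases are settled by transporting them to the half-plane, where $\Phi(z)=sz+r$ and $\Sigma(z)=z/s+\bar r/s$ make the composites $\Phi\circ\Sigma^{-1}$ and $\Sigma\circ\Phi^{-1}$ trivial to test. Without that reduction, the range comparison you propose would have to be redone from scratch in each case.
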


Since every invertible operator is posinormal \cite[Theorem 3.1]{Rhaly}, we have no work to do if \ph\ is an automorphism of \D.  For nonautomorphisms, we develop, in the next section, general requirements for posinormality and coposinormality of a composition operator with symbol $\phi$ and adjoint $\sigma$:
\begin{itemize}
\item  $C_\phi$ is posinormal if and only if $\phi\circ \sigma^{-1}$ is a selfmap of $\D$ and $\phi$ vanishes at a point in $\D$, and
\item $C_\phi$ is coposinormal if and only if $\sigma\circ \phi^{-1}$ is a selfmap of $\D$.
\end{itemize}
The remainder of our work is based on an analysis of how the fixed-point properties of $\phi$ determine when $\phi\circ\sigma^{-1}$ and $\sigma\circ\phi^{-1}$ are selfmaps of $\D$.   In Sections 3 and 4, we characterize posinormality and coposinormality when the Denjoy-Wolff point of $w$ lies on $\partial \D$. In Sections 5 and 6, respectively, we complete our characterization work, handling, in Section 5 the case where $\phi$ has Denjoy-Wolff point in $\D$ and its second fixed point outside of $\D^-$ and then in Section 6 the case where $\phi$ has Denjoy-Wolff point in $\D$ and its second fixed point  on the unit circle.   We conclude with questions for further research in Section 7. 

  The most challenging fixed-point scenario for our characterization work is that of Section 5, in which $\phi$ is assumed to have Denjoy-Wolff point in $\D$ and its second fixed point outside of $\D^-$.  To facilitate our work, we show that although in general posinormality and coposinormality are not similarity invariant, these properties are similarity invariant for composition operators with linear-fractional symbol when the similarity is induced by an automorphic composition operator (and, for posinormality of $C_\phi$, we assume $\phi$ vanishes a point of $\D$).  Also, in Section 5, we point out a connection between posinormality and complex symmetry that may be of independent interest:  Suppose $A$ is a bounded operator on a Hilbert space that is complex symmetric; then $A$ is posinormal if and only if it is coposinormal.

Observe that an operator $A$ that is both posinormal and coposinormal shares with normal operators the property that $\ran\, A = \ran\, A^*$ (applying the range inclusion condition (2)\ of Theorem~\ref{posinormal}).  It's natural to consider what other properties operators that are both posinormal and coposinormal share with normal operators.  For example, if $A$ is both posinormal and coposinormal must all powers of $A$ be posinormal? In Section 4, we establish that if $\phi$ is a parabolic nonautomorphism of $\D$ that vanishes at a point of $\D$, then $C_\phi$ is both posinormal and coposinormal; however, if $n$ is a sufficiently large positive integer, then $C_\phi^n$ is not posinormal.  Remark: This contradicts  Corollary 1(b) of \cite{Kubrusly}, which states that for any bounded linear operator on a Hilbert space $\mathcal{H}$, if $T$ is posinormal and coposinormal, then $T^n$ is posinormal and coposinormal for every $n\ge 1$.  

\section{Posinormality of $C_\phi$ and the Relationship Between $\phi$ and its adjoint $\sigma$}

In this section, we characterize posinormality and coposinormality in terms of the relationship between $\phi$ and its adjoint $\sigma$. 

\begin{theorem}\label{phisigmainv} Let $\phi(z) = \dfrac{az + b}{cz + d}$ be a (nonconstant) linear-fractional selfmap of $\D$, so that
$$
C_\phi^* = T_g C_\sigma T_h^*,
$$
where $ g(z) = \dfrac{1}{-\bar{b}z + \bar{d}}$ and $h(z) = cz+d$ are bounded analytic functions on $\D$ and $\sigma(z) = \dfrac{\bar{a}z - \bar{c}}{-\bar{b} z + \bar{d}}$ is a selfmap of $\D$.
\begin{itemize}
\item[(a)] If  $C_\phi: \Ht \rightarrow \Ht$ is posinormal  then the linear-fractional function
$$
\phi\circ \sigma^{-1}
$$
is an analytic selfmap of $\D$.  
\item[(b)]   If $\phi$ vanishes at a point of $\D$ and $\phi\circ \sigma^{-1}$ is a selfmap of $\D$, then $C_\phi$ is posinormal.
\end{itemize}
\end{theorem}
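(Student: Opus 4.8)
The plan is to reduce both implications to the range-inclusion criterion of Theorem~\ref{posinormal}(2)---$C_\phi$ is posinormal iff $\ran\, C_\phi\subseteq\ran\, C_\phi^*$---and to lean on two structural identities drawn from Cowen's formula. First, using the standard intertwining $C_\sigma T_u=T_{u\circ\sigma}\,C_\sigma$ for $u\in H^\infty$, I would set $w:=(1/g)\circ\sigma^{-1}$ and verify by direct computation that $g\circ\sigma^{-1}=1/w$, where $1/w$ is the linear polynomial $(\bar a+\bar b\,u)/\overline{ad-bc}$. Since $1/w\in H^\infty$ unconditionally, the operator $M:=T_{1/w}T_h^*$ is bounded and Cowen's formula rewrites as
\begin{equation*}
C_\phi^*=T_gC_\sigma T_h^*=C_\sigma\,T_{g\circ\sigma^{-1}}\,T_h^*=C_\sigma M,
\end{equation*}
so that $\ran\, C_\phi^*\subseteq\ran\, C_\sigma$ \emph{always} holds. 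Second, whenever $\tau:=\phi\circ\sigma^{-1}$ is a selfmap of $\D$ we have the genuine operator factorization $C_\phi=C_\sigma C_\tau$ (because $\phi=\tau\circ\sigma$), whence $\ran\, C_\phi\subseteq\ran\, C_\sigma$. A recurring point is that $C_\sigma$ is injective (its symbol is nonconstant), so for subspaces $E,F$ one has $C_\sigma E\subseteq C_\sigma F$ iff $E\subseteq F$; this lets me cancel the common factor $C_\sigma$ in range comparisons.

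For part (a), suppose $C_\phi$ is posinormal. Then $\ran\, C_\phi\subseteq\ran\, C_\phi^*\subseteq\ran\, C_\sigma$, so Douglas's range-inclusion theorem yields a bounded $R$ with $C_\phi=C_\sigma R$. Taking adjoints and applying both sides to a reproducing kernel $K_\alpha$ gives $K_{\phi(\alpha)}=R^*K_{\sigma(\alpha)}$; writing $\mu=\sigma(\alpha)$ this reads $R^*K_\mu=K_{\tau(\mu)}$ for all $\mu\in\sigma(\D)$. Boundedness of $R^*$ then forces $\tau$ inward: from $\|K_{\tau(\mu)}\|\le\|R^*\|\,\|K_\mu\|$ one gets $1-|\tau(\mu)|^2\ge(1-|\mu|^2)/\|R^*\|^2>0$ on $\sigma(\D)$. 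To extend this to all of $\D$, I would observe that $\mu\mapsto R^*K_\mu$ is a bounded $H^2$-valued anti-analytic function on $\D$, so for each fixed $z\in\D$ the scalar function $\mu\mapsto\overline{(R^*K_\mu)(z)}$ is analytic on $\D$ and agrees on $\sigma(\D)$ with the rational function $\mu\mapsto(1-\tau(\mu)\bar z)^{-1}$. By the identity theorem this rational function has no pole in $\D$; a pole would occur wherever $\tau(\mu)=1/\bar z$, and as $z$ ranges over $\D$ the values $1/\bar z$ fill the exterior of $\D^-$, so $\tau(\D)\subseteq\D^-$ and $\tau$ has no pole in $\D$. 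Since $\tau$ is a nonconstant linear-fractional map, $\tau(\D)$ is open, hence $\tau(\D)\subseteq\D$: the map $\phi\circ\sigma^{-1}$ is a selfmap of $\D$.

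For part (b), assume $\phi$ vanishes at a point of $\D$ and $\tau=\phi\circ\sigma^{-1}$ is a selfmap. The vanishing hypothesis is precisely the statement (Proposition~\ref{zero}) that the zero $-\bar a/\bar b$ of the polynomial $1/w$ lies outside $\D^-$, so $w\in H^\infty$ and $T_{1/w}$ is invertible with inverse $T_w$. Combining the factorizations $C_\phi=C_\sigma C_\tau$ and $C_\phi^*=C_\sigma M$ with the injectivity of $C_\sigma$, posinormality $\ran\, C_\phi\subseteq\ran\, C_\phi^*$ is equivalent to $\ran\, C_\tau\subseteq\ran\, M=T_{1/w}\ran\, T_h^*$, i.e.\ to
\begin{equation*}
\ran\,(T_wC_\tau)\subseteq\ran\, T_h^*.
\end{equation*}
When $\phi$ extends analytically across $\partial\D$ (equivalently, its pole $-d/c$ lies strictly outside $\D^-$), the symbol $h=cz+d$ is bounded below on $\partial\D$, so $T_h^*$ is surjective and the inclusion is automatic; this disposes of all such symbols at once.

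The main obstacle is the boundary-pole case $-d/c\in\partial\D$---the parabolic-type symbols and the dilation-type symbols with a second fixed point on $\partial\D$, for which $\phi$ does not extend analytically past $\partial\D$. Here $T_h^*$ fails to be surjective, and the inclusion $\ran\,(T_wC_\tau)\subseteq\ran\, T_h^*$ must be established directly, which I would attempt through the Douglas inequality $(T_wC_\tau)(T_wC_\tau)^*\le\lambda^2\,T_h^*T_h=\lambda^2\,T_{|h|^2}$. Evaluating both sides on the kernels $K_\mu$ collapses the diagonal to the scalar condition
\begin{equation*}
\frac{|w(\mu)|^2}{1-|\tau(\mu)|^2}\le\lambda^2\,\|hK_\mu\|^2 ,
\end{equation*}
and the delicate work is to control $\|hK_\mu\|^2$ as $\mu$ approaches the boundary zero of $h$, where this quantity degenerates. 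This is where the vanishing of $\phi$ and the precise boundary behavior of $\tau$ (its angular-derivative, i.e.\ Julia--Carath\'eodory, data at the relevant fixed point) must be combined to certify the \emph{full} operator inequality rather than merely its diagonal. I expect this boundary/Carleson-type estimate to be the crux; the remainder is bookkeeping with Cowen's formula and the injectivity of $C_\sigma$.
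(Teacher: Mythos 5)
Your part~(a) is correct and follows a route genuinely different from the paper's: you factor $C_\phi^* = C_\sigma M$ with $M = T_{g\circ\sigma^{-1}}T_h^*$ bounded (valid, since $g\circ\sigma^{-1}$ is the polynomial $(\bar b z+\bar a)/\overline{ad-bc}$), invoke Douglas to write $C_\phi = C_\sigma R$, read off $R^*K_\mu = K_{\tau(\mu)}$ on $\sigma(\D)$, and use the identity theorem to force $\tau(\D)\subseteq\D^-$, then openness. The paper instead pushes the test functions $1/(z-\beta)$, $|\beta|>1$, through Cowen's formula and continues the resulting explicit rational function analytically; both proofs end identically (omitted exterior values plus the open mapping property). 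Your version even avoids the paper's preliminary appeal to Corollary~\ref{zeroC} to make $\sigma^{-1}$ analytic on $\D$.

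Part~(b), however, has a genuine gap, and it stems from a false case split. Your reduction---$\ran\ C_\phi\subseteq\ran\ C_\phi^*$ iff $\ran\,(T_wC_\tau)\subseteq\ran\ T_h^*$, via injectivity of $C_\sigma$ and invertibility of $T_{1/w}$ (which is exactly where the hypothesis $\phi(\beta)=0$ for some $\beta\in\D$ enters)---is correct, and so is your observation that the inclusion is automatic whenever $1/h\in H^\infty$. The error is the claim that there remains a ``boundary-pole case'' $-d/c\in\partial\D$ containing the parabolic-type symbols and the dilation-type symbols with a boundary fixed point: that case is vacuous. No linear-fractional selfmap of $\D$ can have its pole on $\partial\D$, for if $-d/c\in\partial\D$ then, since $ad-bc\neq 0$, the numerator $az+b$ does not vanish at $-d/c$, so $|\phi(z)|\to\infty$ as $z\to -d/c$ from inside $\D$, contradicting $|\phi|<1$ on $\D$. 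What lies on the unit circle for parabolic-type maps (and for dilation-type maps with a second fixed point on $\partial\D$) is a fixed point at which $|\phi|=1$, not a pole; for instance the parabolic family $\phi_t(z)=((2-t)z+t)/(-tz+2+t)$ has its pole at $(2+t)/t$, and $|2+t|^2-|t|^2=4+4\Re(t)>0$, so the pole is always outside $\D^-$. This is precisely the fact the paper exploits: $1/h$ is bounded and analytic on $\D$ \emph{because $\phi$ is bounded on $\D$}, so $T_h^*$ is always invertible. Once you add that one observation, your ``easy case'' covers every admissible symbol, the unexecuted Douglas-inequality/Carleson program is unnecessary, and your argument essentially collapses to the paper's proof, which simply exhibits the explicit preimage $q=T_{1/h}^*\left(\tfrac{1}{g\circ\sigma^{-1}}\cdot\left(f\circ\phi\circ\sigma^{-1}\right)\right)$. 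As submitted, though, part~(b) is incomplete: the step you yourself identify as the crux is never carried out.
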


\begin{proof}   Note that  $\sigma$ is nonconstant because $\phi$ is nonconstant (in fact, $\phi'(z) = \frac{ad-bc}{(cz + d)^2}$ is nonzero if and only if $\sigma'(z) = \frac{\bar{a}\bar{d} - \bar{b}\bar{c}}{(-\bar{b}z + \bar{d})^2}$ is nonzero). Also note that
$$
\sigma^{-1}(z) = \frac{\bar{d}z +\bar{c}}{\bar{b}z + \bar{a}}.
$$
 Assume that  $C_\phi: \Ht \rightarrow \Ht$ is posinormal. Recall from Proposition \ref{zero}, this means $\phi$ must vanish at some point in $\D$, so that we must have $-b/a\in \D$. Thus, $-\bar{a}/\bar{b}$ lies outside the closure  of $\D$ and we see from the formula of $\sigma^{-1}$ above,  that $\sigma^{-1}$ is analytic on $\D$.
 
     Let $\beta$ be an arbitrary complex number lying outside the closure of $\D$.  Let $ f(z) = \dfrac{1}{z-\beta}$ and observe that $f$, being bounded and analytic on $\D$, belongs to $\Ht$.  Because $C_\phi$ is posinormal, the range of $C_\phi$ is a subset of the range of $C_\phi^*$. Thus, there is a function $q\in \Ht$ such that
$$
f\circ\phi = T_g C_\sigma T_h^*q.
$$
The preceding equation yields, for each $z\in \sigma(\D)$, 
$$
(-\bar{b} \sigma^{-1}(z) + \bar{d}) f(\phi(\sigma^{-1}(z))) = (T_h^* q)(z); \text{equivalently}, \frac{\bar{a}\bar{d}-\bar{b}\bar{c}}{\bar{b}z + \bar{a}}\left( \frac{1}{\phi(\sigma^{-1}(z))- \beta }  \right) =  (T_h^* q)(z).
$$
Observe that because $-\bar{a}/\bar{b}$ lies outside the closure of $\D$
$$
p(z):= \frac{\bar{a}\bar{d}-\bar{b}\bar{c}}{\bar{b}z + \bar{a}} \left( \frac{1}{\phi(\sigma^{-1}(z))- \beta }   \right)
$$
will fail to be analytic on $\D$ if and only if the linear-fractional transformation $\phi\circ\sigma^{-1}$ assumes the value $\beta$ at some $z_0$ in $\D$, in which case the function $p$ is analytic on $\D\setminus \{z_0\}$, having a pole of order 1 at $z_0$.  However, because  $T_h^* q$ agrees with $p$ on the nonempty open subset $\sigma(\D)$ of $\D$, the $\Ht$ function $T_h^* q$ provides an analytic continuation of $p$ to $\D$, preventing $p$ from having a pole at any point of $\D$.  Thus,  $\phi\circ\sigma^{-1}$ does not assume on $\D$ the value $\beta$. Because $\beta$ in the complement of the closure of $\D$ is arbitrary, we conclude that the linear-fractional transformation $\phi\circ\sigma^{-1}$ must map $\D$ into the closure of $\D$, but being nonconstant and analytic, it's an open map and we see $(\phi\circ\sigma^{-1})(\D)\subseteq \D$, completing the proof of part (a).  

Proof of part (b):   Now suppose that $\phi$ vanishes at some point of $\D$ and $\phi\circ \sigma^{-1}$ is an analytic selfmap of $\D$.  We complete the proof of part (b) by showing the range of $C_\phi$ is contained in the range of $C_\phi^*$.   Let $f\in \Ht$ be arbitrary.  
 
  Note that $f\circ (\phi\circ \sigma^{-1})$ belongs to $\Ht$ because we are assuming $\phi\circ \sigma^{-1}$ is an analytic selfmap of $\D$.   Note that $\left(\dfrac{1}{h}\right)(z) = \dfrac{1}{c z + d}$ is a bounded analytic function on $\D$ because $\phi(z) = \dfrac{az + b}{cz + d}$ is bounded  on $\D$.  Thus, $T_h^*$ has inverse $T_{1/h}^*$.   Just as in the proof of part (a), because $\phi$ vanishes at $-b/a \in \D$, $-\bar{a}/\bar{b}$ lies outside the closure of $\D$ and thus
  $$
  \left(\frac{1}{g\circ\sigma^{-1}}\right)(z) =\frac{\bar{a}\bar{d}-\bar{b}\bar{c}}{\bar{b}z + \bar{a}}
  $$
  is a bounded analytic function on $\D$.  Observe that
     $$
  q := T_{1/h}^*\, \frac{1}{g\circ\sigma^{-1}}\,  f\circ(\phi\circ\sigma^{-1})
  $$
belongs to $\Ht$ because $\frac{1}{g\circ\sigma^{-1}}\,  f\circ(\phi\circ\sigma^{-1})\in \Ht$, being the product of a function bounded and analytic on $\D$ and the $\Ht$ function $f\circ(\phi\circ\sigma^{-1})$.  It's easy to check that $C_\phi^* q = T_g C_\sigma T_h^* q = f\circ \phi$. It follows that $\text{ran}\ C_\phi \subseteq \text{ran}\ C_\phi^*$, as desired.
\end{proof}

Observe that when the hypotheses of part (b) of the preceding theorem hold,  the proof of (b) shows that $C_\phi = C_\phi^* T$, where $T = T_{1/h}^*\, T_{1/g\circ\sigma^{-1}}\, C_{\phi\circ\sigma^{-1}}$.   

 The following result characterizing coposinormality is an analogue of Theorem~\ref{phisigmainv}.  In its statement $\overline{\ph(\D)}$ denotes the set of complex conjugates of elements of $\ph(\D)$. 

\begin{theorem}\label{sigmaphiinv} Let $\phi$ be a linear-fractional selfmap of $\D$, and let $\sigma$ be its adjoint selfmap.  The following are equivalent:
\begin{itemize}
\item[(a)] $C_\phi: \Ht \rightarrow \Ht$ is coposinormal;
\item[(b)]  $1/\overline{\ph(\D)} \cap \ph^{-1}(\D) = \emptyset$;
\item[(c)]  $\sigma\circ\phi^{-1}$ is a selfmap of $\D$.
\end{itemize}
\end{theorem}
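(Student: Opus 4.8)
The plan is to prove the cycle (c)$\Rightarrow$(a)$\Rightarrow$(c) together with the purely function-theoretic equivalence (b)$\Leftrightarrow$(c). Throughout I will use that coposinormality of $\C$ means, by part (2) of Theorem~\ref{posinormal} applied to $\Cs$, that $\ran\Cs\subseteq\ran\C$, and that since $T_g$ and $T_h^*$ are invertible on $\Ht$ (with inverses $T_{1/g}$ and $T_{1/h}^*$; here $1/g(z)=-\bar b z+\bar d$ is a polynomial and $1/h(z)=1/(cz+d)\in\Hi$ because $\phi$ has no pole in $\D$), Cowen's factorization gives $\ran\Cs=T_g(\ran C_\sigma)=\{g\cdot(u\circ\sigma):u\in\Ht\}$. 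The preliminary observation that drives both analytic directions is that the unique finite pole of the linear-fractional map $g\circ\phi^{-1}$ occurs at $z_p:=\phi(\bar d/\bar b)$, which is exactly the pole of $\sigma\circ\phi^{-1}$: both arise from pulling the common pole $\bar d/\bar b$ of $g$ and of $\sigma$ back through $\phi^{-1}$.

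For (c)$\Rightarrow$(a), I mirror the proof of part (b) of Theorem~\ref{phisigmainv}. If $\sigma\circ\phi^{-1}$ is a selfmap of $\D$ it has no pole in $\overline{\D}$ (a boundary pole would force interior values near $z_p$ out of $\D$), so $z_p\notin\overline{\D}$ and hence $g\circ\phi^{-1}\in\Hi$. Then, given $u\in\Ht$, I set $k=(g\circ\phi^{-1})\cdot\big(u\circ(\sigma\circ\phi^{-1})\big)$: the factor $u\circ(\sigma\circ\phi^{-1})$ lies in $\Ht$ because $\sigma\circ\phi^{-1}$ is an analytic selfmap of $\D$, and multiplication by the $\Hi$ function $g\circ\phi^{-1}$ keeps us in $\Ht$. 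A direct substitution using $\phi^{-1}\circ\phi=\mathrm{id}$ gives $k\circ\phi=g\cdot(u\circ\sigma)$, so $g\cdot(u\circ\sigma)\in\ran\C$; as $u$ is arbitrary, $\ran\Cs\subseteq\ran\C$.

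For (a)$\Rightarrow$(c), I would run the test-function argument of part (a) of Theorem~\ref{phisigmainv}. Fixing $\beta$ outside $\overline{\D}$ and taking $u_\beta(z)=1/(z-\beta)\in\Ht$, coposinormality yields $k\in\Ht$ with $k\circ\phi=g\cdot(u_\beta\circ\sigma)$; substituting $w=\phi(z)$ shows that $k$ agrees on the nonempty open set $\phi(\D)$ with the rational function $p_\beta(w)=(g\circ\phi^{-1})(w)/\big((\sigma\circ\phi^{-1})(w)-\beta\big)$, and therefore continues it analytically to $\D$. The point to check is that $p_\beta$ has no pole in $\D$. At $z_p$ both factors blow up to first order, so $p_\beta$ is finite (removable) there; consequently the only candidate pole sits at the unique $w_\beta$ with $(\sigma\circ\phi^{-1})(w_\beta)=\beta$, and apart from the single value $\beta$ at which the zero of $g\circ\phi^{-1}$ cancels it, this is a genuine simple pole. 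Analyticity of $k$ then forces $w_\beta\notin\D$, i.e. $\sigma\circ\phi^{-1}$ omits every such $\beta$ on $\D$. Letting $\beta$ range over $\CP_\infty\setminus\overline{\D}$ and invoking the open mapping theorem to discard the lone exceptional value, $(\sigma\circ\phi^{-1})(\D)\subseteq\D$, which is (c). I expect this pole bookkeeping to be the main obstacle: unlike in Theorem~\ref{phisigmainv}(a), the leading factor $g\circ\phi^{-1}$ is \emph{not} known in advance to be analytic on $\D$, and the argument survives only because its pole coincides with, and is cancelled by, the pole of $\sigma\circ\phi^{-1}$.

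Finally, (b)$\Leftrightarrow$(c) is pure geometry on $\CP_\infty$. Writing $\rho(z)=1/\bar z$ for inversion in the unit circle, the formula $\sigma^{-1}(z)=(\bar d z+\bar c)/(\bar b z+\bar a)$ recorded earlier gives directly the key identity $\rho\circ\phi\circ\rho=\sigma^{-1}$. Now $\sigma\circ\phi^{-1}$ is a selfmap of $\D$ exactly when $\phi^{-1}(\D)\subseteq\sigma^{-1}(\D)$, and $1/\overline{\phi(\D)}=\rho(\phi(\D))$. Using that $\rho$ and $\phi$ are bijections of $\CP_\infty$ and that $\rho(\overline{\D})=\CP_\infty\setminus\D$, I compute $\CP_\infty\setminus\rho(\phi(\D))=\rho(\phi(\CP_\infty\setminus\D))=(\rho\circ\phi\circ\rho)(\overline{\D})=\sigma^{-1}(\overline{\D})$. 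Hence (b) says precisely that $\phi^{-1}(\D)\subseteq\sigma^{-1}(\overline{\D})$; since $\phi^{-1}(\D)$ is open and $\sigma^{-1}(\D)$ is the interior of $\sigma^{-1}(\overline{\D})$, this is equivalent to $\phi^{-1}(\D)\subseteq\sigma^{-1}(\D)$, namely (c).
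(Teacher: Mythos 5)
Your proposal is correct, and it reorganizes the equivalences differently from the paper, which proves the cycle $(a)\Rightarrow(b)\Rightarrow(c)\Rightarrow(a)$. Your $(c)\Rightarrow(a)$ is the paper's argument in range-inclusion clothing: the paper writes down the operator $T = T_{g\circ\phi^{-1}}C_{\sigma\circ\phi^{-1}}T_h^*$ and verifies $\Cs = \C T$, using exactly your observation that the non-removable singularity of $\sigma\circ\phi^{-1}$ at $\phi(\bar d/\bar b)$ must lie outside $\D^-$, so that $g\circ\phi^{-1}\in\Hi$ (the paper also notes explicitly that the $b=0$ case is harmless because the relevant factor is then constant --- worth adding to your write-up, since your phrase ``both factors blow up to first order at $z_p$'' does not literally apply there). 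Your $(b)\Leftrightarrow(c)$ is a tightened, two-way version of the paper's $(b)\Rightarrow(c)$: both rest on the identity $\rho\circ\phi\circ\rho=\sigma^{-1}$ (Cowen's $\phi_e^{-1}=\sigma$), and your interior-of-$\sigma^{-1}(\overline{\D})$ argument cleanly upgrades the implication to an equivalence. The genuine divergence is in the remaining direction: the paper proves $(a)\Rightarrow(b)$ by applying $\Cs$ to reproducing kernels, so the test functions are $K_{\phi(\alpha)}$ and the Toeplitz factors never appear --- the pole of $K_{\phi(\alpha)}\circ\phi^{-1}$ sits at a point of $1/\overline{\phi(\D)}$ and a contradiction is immediate. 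You instead prove $(a)\Rightarrow(c)$ directly with the Cauchy-type test functions $1/(z-\beta)$, mirroring Theorem~\ref{phisigmainv}(a); this works, but it buys you the extra bookkeeping you correctly identify (the removable singularity of $p_\beta$ at $z_p$ where the poles of $g\circ\phi^{-1}$ and $\sigma\circ\phi^{-1}$ cancel, plus the exceptional value $\beta=-\bar a/\bar b$ discarded by the open mapping theorem). Since $K_{\phi(\alpha)}$ is itself a constant multiple of $1/(z-\beta)$ with $\beta=1/\overline{\phi(\alpha)}$, the paper's choice can be seen as selecting precisely those test functions for which the Toeplitz factors and the cancellation at $z_p$ are invisible; your route is more uniform with the posinormality theorem, the paper's is shorter.
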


\begin{proof}  $(a)\implies(b)$: If \C\ is coposinormal, then ran $\Cs \subseteq$ ran \C.  Recall that $\Cs K_{\alpha} = K_{\phi(\alpha)}$ for $\alpha \in \D$; thus, for each $\alpha$ there must exist $f_{\alpha} \in \Ht$ so that $K_{\phi(\alpha)} = f_\alpha \circ \phi$.  Suppose, in order to obtain a contradiction,  that $1/\overline{\ph(\D)}$ and $\ph^{-1}(\D)$ intersect;  then there exists $\beta \in \D$ so that for some $\alpha$, $\overline{\ph(\alpha)}\ph^{-1}(\beta)=1$.   For each $z$ in the nonempty open subset $\phi(\D)$ of $\D$, we have
$$
f_{\alpha}(z) = K_{\phi(\alpha)} \circ \ph^{-1} (z) = \frac{1}{\vstrutb 1-\overline{\phi(\alpha)}\phi^{-1}(z)},
$$  
and thus, by analyticity, $f_\alpha$ and the linear-fractional function $q(z):= \dfrac{1}{\vstrutb 1-\overline{\phi(\alpha)}\phi^{-1}(z)}$ must agree on all of $\D$.  But $q$ has a pole at $\beta$, a contradiction.  Hence, $(a)\implies(b)$. \\

 $(b)\implies(c)$: Suppose that $1/\overline{\ph(\D)} \cap \ph^{-1}(\D) = \emptyset$.  Let $\E = \CP_\infty\setminus \D^-$ be the exterior of the closed disk.   Because $\phi$ and $\phi^{-1}$ are nonconstant analytic functions, they are open mappings. Thus,  $1/\overline{\ph(\D)} \cap \ph^{-1}(\D) = \emptyset$ is equivalent to 
 $$
 1/\overline{\ph(\D)}\subseteq \phi^{-1}(\E), 
 $$
 which implies 
 \begin{equation}\label{ExtInc}
 \phi\left(1/\overline{\ph(\D)}\right)\subseteq \E.
 \end{equation} 
 Let $\phi_e(z) = 1/\overline{\phi(1/\bar{z})}$. Because $\phi_e$ an analytic selfmap of $\E$, its inverse $\phi_e^{-1}$ must be a selfmap of $\D$.  In fact,  a computation, which may be based on 
 $$
 \phi_e^{-1} = \frac{1}{\overline{\phi^{-1}\left(\frac{1}{\bar{z}}\right)}},
 $$
   shows that $\phi_e^{-1} = \sigma$, the adjoint of $\phi$ (see \cite[Lemma 1]{Cow}).  Now observe that the inclusion (\ref{ExtInc}) shows that
 $\phi\circ \phi_e$ is an analytic selfmap of $\E$. 
  It follows that its inverse,
 $$
 (\phi\circ \phi_e)^{-1} = \phi_e^{-1}\circ \phi^{-1} = \sigma\circ \phi^{-1}
 $$
 is an analytic selfmap of $\D$, which completes the proof that $(b) \implies (c)$.
 
  $(c)\implies(a)$:  Suppose that $\sigma\circ \phi^{-1}$ is a selfmap of $\D$. Letting $\phi(z) = (az + b)/(cz +d)$ and assuming $b$ is nonzero, we have
  $$
  (\sigma\circ \phi^{-1})(z) = \frac{\bar{a}\phi^{-1}(z) - \bar{c}}{-\bar{b}\phi^{-1}(z) + \bar{d}}.
  $$
  Observe that $\sigma\circ\phi^{-1}$ has a non-removable singularity at $\phi(\bar{d}/\bar{b})$: 
  $$
  (\sigma\circ\phi^{-1})\left(\phi\left(\frac{\bar{d}}{\bar{b}}\right)\right) = \frac{(\bar{a}\bar{d} - \bar{b}\bar{c})/\bar{b}}{0},
  $$
  and $(\bar{a}\bar{d} - \bar{b}\bar{c})/\bar{b}$ is nonzero because $\phi$ is nonconstant.  Thus, $\phi(\bar{d}/\bar{b})$ lies outside of $\D^-$, which means, in particular, that
 \begin{equation}\label{gphiinvbdd}
  z\mapsto \frac{1}{-\bar{b}\phi^{-1}(z) - \bar{d}}  \ \text{is a bounded analytic function on}\ \D.
  \end{equation}
  Observe that if $\bar{b}$ is zero, the preceding function is still bounded and analytic on $\D$ (being a constant function).
    Let $g = 1/(-\bar{b}z + \bar{d})$, $h(z) = cz + d$, and 
  $$
  T = T_{g\circ \phi^{-1}} C_{\sigma\circ \phi^{-1}} T_{h}^*.
  $$
  Observe that $T$ is a bounded operator on $H^2$, being a product of three bounded operators (with the first factor being bounded by (\ref{gphiinvbdd})).  Now note that 
  $$
  C_\phi^* = C_\phi T,
  $$
  which shows, by part (4) of Theorem~\ref{posinormal}, that $C_\phi$ is coposinormal on $H^2$. This completes the proof that $(c)\implies (a)$ and thus the proof of the theorem.
  \end{proof}

Theorems~\ref{phisigmainv} and \ref{sigmaphiinv} above reduce our posinormality and coposinormality work to determining, respectively, when the linear-fractional transformations $\phi\circ\sigma^{-1}$ and $\sigma\circ\phi^{-1}$   are selfmaps of $\D$.   The level of difficulty of making these determinations varies significantly with the fixed-point properties corresponding to  $\phi$'s type---hyperbolic, parabolic, or dilation.

\section{The Hyperbolic Case}

  \bigskip
   
      Among the linear-fractional selfmaps of $\D$ having hyperbolic type, only the automorphisms induce posinormal composition operators. 
      
      \begin{theorem}\label{HCnonposi}
Suppose that $\phi$ is a non-automorphic linear-fractional selfmap of $\D$ having Denjoy-Wolff point $w$  belonging to $\partial\D$ such that $\phi'(w) <1$.  Then $C_\phi:H^2\rightarrow H^2$ is not posinormal.

\end{theorem}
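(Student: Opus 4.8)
The plan is to prove the contrapositive of the necessary condition in Theorem~\ref{phisigmainv}(a): I will show that for a hyperbolic non-automorphism $\phi$ the linear-fractional map $\phi\circ\sigma^{-1}$ is \emph{not} a selfmap of $\D$, whence $C_\phi$ cannot be posinormal. I would first record the coordinate-free description of the adjoint extracted from the proof of Theorem~\ref{sigmaphiinv}: writing $R(z)=1/\bar z$ for reflection across $\partial\D$, one has $\phi_e=R\phi R$ and $\sigma=\phi_e^{-1}$, so $\sigma=R\phi^{-1}R$. The next observation is that the property ``$\phi\circ\sigma^{-1}$ is a selfmap of $\D$'' is invariant under replacing $\phi$ by a disk-automorphic conjugate $U^{-1}\phi U$. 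The point enabling this is that $RUR=U$ for every disk automorphism $U$ (both are linear-fractional and agree pointwise on $\partial\D$), so $\sigma$ transforms by the same conjugation, $\widetilde\sigma=U^{-1}\sigma U$, and therefore $\widetilde\phi\circ\widetilde\sigma^{-1}=U^{-1}(\phi\circ\sigma^{-1})U$.

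Having earned the right to normalize, I would pass to the right half-plane $\HP$ via the Cayley transform $\gamma(z)=(1+z)/(1-z)$, which sends the Denjoy--Wolff point $w$ to $\infty$ and linearizes hyperbolic maps fixing $w$: the conjugate $\Phi=\gamma\phi\gamma^{-1}$ becomes affine, $\Phi(W)=\tfrac1s W+c$ with $s=\phi'(w)\in(0,1)$, and $\Re c>0$ precisely because $\phi$ is a non-automorphic selfmap (equivalently, its second fixed point lies outside $\D^-$). A routine check shows $R$ conjugates to $\rho(W)=-\bar W$, reflection across the imaginary axis $\partial\HP$, so $\Sigma=\gamma\sigma\gamma^{-1}=\rho\,\Phi^{-1}\rho$ computes to $\Sigma(W)=sW+s\bar c$, giving $\Sigma^{-1}(W)=\tfrac1s W-\bar c$.

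The conclusion then falls out of a single affine composition: $(\Phi\circ\Sigma^{-1})(W)=\tfrac{1}{s^{2}}W+c'$ with $c'=c-\bar c/s$, so $\Re c'=(\Re c)\bigl(1-\tfrac1s\bigr)<0$ since $\Re c>0$ and $0<s<1$. An affine map with positive leading coefficient $1/s^{2}$ and constant term of negative real part sends points near $\partial\HP$ out of $\HP$ (take $W=\varepsilon>0$ small, so $\Re\bigl((\Phi\circ\Sigma^{-1})(\varepsilon)\bigr)=\varepsilon/s^{2}+\Re c'<0$). Hence $\Phi\circ\Sigma^{-1}$, and therefore $\phi\circ\sigma^{-1}$, is not a selfmap of $\D$, and Theorem~\ref{phisigmainv}(a) yields that $C_\phi$ is not posinormal.

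I expect the main obstacle to be the bookkeeping that legitimizes the normalization and the half-plane transfer of the adjoint, rather than any single hard estimate. The naive derivative test is inconclusive: $\phi\circ\sigma^{-1}$ fixes $w$ with derivative $s^{2}<1$, which is entirely consistent with being a disk selfmap, so the obstruction is invisible at the level of multipliers and surfaces only in the translation term $c'$. Extracting $c'$ forces one to carry the \emph{anti}-holomorphic reflection $R$ correctly through the Cayley conjugation (the identities $\rho(W)=-\bar W$ and $\Sigma=\rho\Phi^{-1}\rho$), and the verification that $RUR=U$ so that $\sigma$ transforms like $\phi$ under automorphic conjugation is the other place demanding care.
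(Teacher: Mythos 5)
Your proposal is correct and follows essentially the same route as the paper's proof: reduce via Theorem~\ref{phisigmainv}(a) to showing $\phi\circ\sigma^{-1}$ is not a selfmap of $\D$, transfer to the right half-plane where $\Phi$ and $\Sigma$ are affine, and observe that the translation term of $\Phi\circ\Sigma^{-1}$ has negative real part. The only differences are cosmetic: the paper normalizes the Denjoy--Wolff point to $1$ by unitary invariance of posinormality and verifies the formula for $\Sigma$ by direct coefficient computation, whereas you justify the normalization through conjugation-invariance of the selfmap condition (essentially Lemma~\ref{adjauto}) and obtain $\Sigma=\rho\,\Phi^{-1}\rho$ from the reflection identity; both of these checks are sound.
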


\begin{proof}

Observe that $\psi(z) := \bar{w}\phi(wz)$ is a hyperbolic-type analytic selfmap of $\D$ having Denjoy-Wolff point $1$.  Moreover $C_\phi$ is unitarily equivalent to $C_\psi$ because $U:= C_{wz}$ is unitary (having adjoint $C_{\bar{w}z}$) and $C_\phi = U^* C_\psi U$.  Because posinormality is a unitary invariant, we may assume, without loss of generality, that $w = 1$, so that $\phi(1) = 1$ and $\phi'(1) < 1$.

To show that $C_\phi$ is not posinormal, we apply Theorem~\ref{phisigmainv} above---we establish that $\phi\circ\sigma^{-1}$ is not a selfmap of $\D$, where $\sigma$ is the adjoint of $\phi$.  The failure of $\phi\circ \sigma^{-1}$ to be a selfmap of $\D$ is readily apparent if we consider the natural selfmaps of the right halfplane corresponding to $\phi$ and $\sigma$. 

Let   $\nu(z) = \frac{1+z}{1-z}$, which maps $\D$ onto the right halfplane $\mathbb{\HP}$, taking $1$ to $\infty$. Thus, $\Phi := \nu \circ \phi \circ \nu^{-1}$ is a selfmap of the right halfplane  fixing $\infty$, and $\Phi(z) = sz + r$ for certain constants $s$ and $r$.  Note that $s > 0$ and $\Re(r)\ge 0$ because $\Phi$ maps the right halfplane to itself---in fact, because $\Phi$ is is not an automorphism, $\Re(r)>0$ . Since, for each $z\in \D$, the iterate sequence $\phi_n(z)$ converges to $1$, it follows that $|\Phi_n(z)| \rightarrow \infty$ as $n\rightarrow \infty$.  Thus, $s \ge 1$.  In fact, $s = \frac{1}{\phi'(1)} >1$:  if $t = \nu(x)$, then
$$
\phi'(1) = \lim_{x\rightarrow 1^-}    \frac{1-\phi(x)}{1-x}  = \lim_{t\rightarrow \infty} \frac{1-\nu^{-1}\Phi(t) }{1-\nu^{-1}(t)} =  \lim_{t\rightarrow \infty} \frac{1+t}{st+r + 1} = \frac{1}{s}.
$$

      The right halfplane incarnation of $\sigma$, $\nu\circ\sigma\circ\nu^{-1}$, is given by $\Sigma(z) = \frac{1}{s}z + \frac{\bar{r}}{s}$, which can be seen by noting that
  $$
  \phi(z) = \nu^{-1}(s\nu(z) + r) = \frac{ (-1 + r - s)z + 1- r-s}{(1+r-s) z -1-r-s}\   \text{and}\   \sigma(z) = \frac{ (-1 + \bar{r} - s)z - (1+\bar{r}-s)}{-(1-\bar{r}-s)z -1-\bar{r}-s}.
  $$
 We have $\Sigma^{-1}(z) =     sz - \bar{r}$ and $(\Phi\circ\Sigma^{-1})(z) = s^2z -s\bar{r} + r$  is not a selfmap of the right halfplane because $s > 1$ means $\Re(-s\bar{r} + r) = (1-s)\Re(r) <  0$.  Let $\beta$ be a number in $\HP$ such that $(\Phi\circ \Sigma^{-1})(\beta)$  has negative real part.  Observe that $z_0 = \nu^{-1}(\beta)$ is a point in $\D$ and
 $$
( \phi\circ\sigma^{-1})(z_0) = \nu^{-1}\left((\Phi\circ \Sigma^{-1})(\beta)\right)
 $$
 is not in $\D$ because $\nu^{-1}$ maps  $\HP$ onto $\D$ and $(\Phi\circ \Sigma^{-1})(\beta)$ has negative real part. Thus, the linear-fractional function $\phi\circ\sigma^{-1}$ is not a selfmap of $\D$ and $C_\phi$ cannot be posinormal.\end{proof}

On the other hand, selfmaps of hyperbolic type always induce coposinormal composition operators.  In his dissertation \cite{Sadraoui}, Sadraoui incidentally proved this, though the terminology  ``coposinormal''  is not used.   Sadraoui establishes that if $\phi$ is of  hyperbolic type, then there is a bounded linear operator $C$ on $H^2$ such that $C_\phi = C C_\phi^*$, equivalently, that $C_\phi^* = C_\phi C^*$, so that $C_\phi$ is coposinormal by via Rhaly's result Theorem~\ref{posinormal}, Part (4).   Because Sadraoui's dissertation was never published, we include a proof here, establishing coposinormality via Theorem~\ref{sigmaphiinv}, showing $\sigma\circ\phi^{-1}$ is a selfmap of $\D$ when $\phi$ is of hyperbolic type and $\sigma$ is its adjoint.  

\begin{theorem}\cite{Sadraoui}\label{dl1coposi}
Suppose \ph\ is a linear-fractional selfmap of $\D$ having Denjoy-Wolff point $w\in \partial \D$ with $\phi'(w) < 1$. Then $C_\phi:H^2\rightarrow H^2$ is coposinormal.
\end{theorem}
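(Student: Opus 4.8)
The plan is to verify coposinormality through the equivalence $(a)\Leftrightarrow(c)$ of Theorem~\ref{sigmaphiinv}: it suffices to show that the linear-fractional map $\sigma\circ\phi^{-1}$ is an analytic selfmap of $\D$, where $\sigma$ is the adjoint of $\phi$. First I would reduce to the normalized situation $w=1$ exactly as in the proof of Theorem~\ref{HCnonposi}. Setting $U=C_{wz}$ (unitary) and $\psi(z)=\bar w\phi(wz)$, we have $C_\phi = U^*C_\psi U$, hence $C_\phi^* = U^*C_\psi^* U$; since posinormality is a unitary invariant and $C_\phi$ is coposinormal precisely when $C_\phi^*$ is posinormal, coposinormality is a unitary invariant as well. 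Thus we may assume $\phi(1)=1$ and $\phi'(1)<1$.

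With this normalization I would transport the problem to the right halfplane $\HP$ via $\nu(z)=\frac{1+z}{1-z}$, reusing verbatim the computations already carried out in the proof of Theorem~\ref{HCnonposi}. There it is shown that $\Phi:=\nu\circ\phi\circ\nu^{-1}$ has the form $\Phi(z)=sz+r$ with $s=1/\phi'(1)>1$ and $\Re(r)>0$, and that the halfplane incarnation of $\sigma$, namely $\Sigma:=\nu\circ\sigma\circ\nu^{-1}$, is given by $\Sigma(z)=\frac1s z+\frac{\bar r}{s}$. Because $\nu$ conjugates everything in sight, $\sigma\circ\phi^{-1}=\nu^{-1}\circ(\Sigma\circ\Phi^{-1})\circ\nu$, so $\sigma\circ\phi^{-1}$ is a selfmap of $\D$ if and only if $\Sigma\circ\Phi^{-1}$ is a selfmap of $\HP$. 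Computing $\Phi^{-1}(z)=\frac1s z-\frac rs$ gives
$$
(\Sigma\circ\Phi^{-1})(z)=\frac{1}{s^2}\,z+\frac{s\bar r - r}{s^2}.
$$

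The final step is to observe that an affine map $z\mapsto az+c$ with $a>0$ carries $\HP$ into itself precisely when $\Re(c)\ge 0$. Here $a=1/s^2>0$, and the real part of the translation term is $\Re\!\left(\frac{s\bar r - r}{s^2}\right)=\frac{(s-1)\Re(r)}{s^2}$, which is strictly positive because $s>1$ and $\Re(r)>0$. Hence $\Sigma\circ\Phi^{-1}$ is a selfmap of $\HP$, so $\sigma\circ\phi^{-1}$ is a selfmap of $\D$, and Theorem~\ref{sigmaphiinv} yields that $C_\phi$ is coposinormal. There is no serious obstacle in this argument; the one point requiring care is the orientation of the composition. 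This proof is the exact \emph{mirror} of Theorem~\ref{HCnonposi}, where the relevant map was $\Phi\circ\Sigma^{-1}$ and the real part of its translation term was $(1-s)\Re(r)<0$, forcing the selfmap property to fail. Reversing the order of composition flips the sign of that crucial quantity, which is precisely why hyperbolic-type symbols are always coposinormal yet (for non-automorphisms) never posinormal.
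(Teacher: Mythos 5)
Your proof is correct and takes essentially the same route as the paper's: normalize to $w=1$, pass to the right halfplane via $\nu$, and check that $(\Sigma\circ\Phi^{-1})(z)=\tfrac{1}{s^2}z+\tfrac{s\bar r-r}{s^2}$ preserves $\HP$, then invoke Theorem~\ref{sigmaphiinv}. One small point: the theorem also covers hyperbolic \emph{automorphisms}, for which $\Re(r)=0$ rather than $\Re(r)>0$ as you assert; your stated criterion $\Re(c)\ge 0$ still handles that case (the paper writes $\Re(r)\ge 0$ and obtains strict positivity of the image's real part from the $\tfrac{1}{s^2}\Re(z)$ term), so nothing breaks.
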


\begin{proof}
Just as in the proof of Theorem~\ref{HCnonposi}, we may assume $w = 1$, and, with  $\nu(z) = \frac{1+z}{1-z}$, we have, corresponding to $\phi$ and its adjoint $\sigma$, the following selfmaps of the right halfplane $\HP$:   $\Phi(z) =  (\nu \circ \phi \circ \nu^{-1})(z) = sz + r$ and $\Sigma(z) = \nu\circ\sigma\circ\nu^{-1}= \frac{1}{s}z + \frac{\bar{r}}{s}$, where $s > 1$ and $\Re(r) \ge 0$. Observe that
$$
(\Sigma \circ \Phi^{-1})(z) = \frac{1}{s}\left(\frac{z-r}{s}\right) +\frac{\bar{r}}{s} = \frac{1}{s^2}z + \frac{1}{s^2}(s\bar{r} - r)
     $$
    maps the right halfplane $\HP$ to itself---if $\Re(z) > 0$, then
    $$
    \Re\left( \frac{1}{s^2}z + \frac{1}{s^2}(s\bar{r} - r) \right) = \frac{1}{s^2} \Re(z) + \frac{1}{s^2}(s-1)\Re(r) > 0,   
    $$
    because $s > 1$ and $\Re(r)\ge 0$.  It follows that $\sigma\circ \phi^{-1}$ is a selfmap of $\D$:
    $$
    (\sigma\circ\phi^{-1})(\D) =  (\nu^{-1}\circ \Sigma \circ \Phi \circ \nu)(\D) = \nu^{-1}\left(\vstrut\Sigma(\Phi(\HP))\right) \subseteq \nu^{-1}(\HP) \subseteq \D.
    $$
    Thus, by Theorem~\ref{sigmaphiinv}, $C_\phi: H^2\rightarrow H^2$ is coposinormal.
    \end{proof}

 The preceding theorem shows all hyperbolic-type selfmaps of $\D$ give rise to coposinormal composition operators. We remark that some hyperbolic-type selfmaps give rise to cohyponormal composition operators---in fact cosubnormal ones.  Cowen and Kriete \cite{cowenkrieteSubN} have shown that if $\phi$ is of hyperbolic type with Denjoy-Wolff point $w$ and its second fixed point has the form $-tw$ for some $t\in [1, \infty]$, then $C_\phi$ is cosubnormal.

\section{The Parabolic Case}

We  continue to assume that the Denjoy-Wolff  point of $\phi$ is at $1$, and in the parabolic case, we have $\ph'(1)=1$. The right halfplane incarnation of such a mapping $\phi$ is given by
$$
\Phi_t(z): = (\nu\circ\phi\circ\nu^{-1})(z) = z + t,
$$
where $t$ is nonzero and $\Re(t) \ge 0$ (and $\nu(z) = (1+z)/(1-z)$, as in the preceding section).   Thus, these parabolic selfmaps of $\D$  form the following one-parameter family of functions:
\begin{equation}\label{Pfam}
  \quad \quad  \ph_t(z) = (\nu^{-1}\circ\Phi_t\circ \nu)(z) =  \frac{(2-t)z+t}{-tz+2+t}, \Re(t)\ge 0, t\ne 0.
\end{equation}
 Note that $\ph_t$ is an automorphism of $\D$ precisely when $\Re(t) = 0$ (because $\Phi_t$ is an automorphism of $\HP$ precisely when $\Re(t) = 0$). Also, note that whenever $t$ and $s$ are complex numbers with nonnegative real part, then 
\begin{equation}\label{SG}
 \ph_s\circ \phi_t =   \nu^{-1}\circ\Phi_s\circ \Phi_t \nu = \nu^{-1}\circ \Phi_{s + t} \circ \nu = \phi_{s + t}.
 \end{equation}
 Thus $\{\ph_t: \Re(t) \ge 0\}$ forms a semigroup all of whose members are selfmaps of $\D$ of parabolic type except for the identify function $\ph_0$.  
 
Assume $\Re(t) \ge 0$.  Observe that the adjoint $\sigma_t$ of $\ph_t$ is $\phi_{\bar{t}}$. Thus, $\phi_{t -\bar{t}}\circ\sigma_t =   \phi_{t -\bar{t}}\circ\phi_{\bar{t}}  = \phi_t$; that is,  $\phi_{2i{\tiny \Im(t)}}\circ\sigma_t = \phi_t$. Composing both sides of the preceding equation with $\sigma_t^{-1}$, we see that  $\phi_t\circ\sigma_t^{-1} = \phi_{2i{\tiny \Im(t)}}$, which is a selfmap of $\D$ (in fact, an automorphism of $\D$).   
 
 \begin{proposition}\label{d1posi} The selfmap $\ph_t$  of parabolic type induces a posinormal composition operator on $H^2$ if and only if $\ph_t$ vanishes at some point in $\D$.
\end{proposition}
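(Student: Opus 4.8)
The plan is to reduce both directions of the equivalence to the general criteria of Section~2, exploiting the computation carried out in the paragraph immediately preceding this proposition. The decisive point is that for a parabolic $\phi_t$ the linear-fractional transformation $\phi_t\circ\sigma_t^{-1}$ is \emph{automatically} a selfmap of $\D$: with $\sigma_t = \phi_{\bar t}$ the adjoint of $\phi_t$, the semigroup law (\ref{SG}) was used to obtain $\phi_t\circ\sigma_t^{-1} = \phi_{2i\Im(t)}$, which is in fact an automorphism of $\D$. Consequently the only condition governing posinormality that can remain in force is the vanishing condition, and the proposition should follow directly from the machinery already assembled.

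For the forward implication I would argue that if $\C$ (with $\phi=\phi_t$) is posinormal, then Corollary~\ref{zeroC}---equivalently, Proposition~\ref{zero} together with part~(2) of Theorem~\ref{posinormal}---forces $\phi_t$ to vanish at some point of $\D$. No parabolic-specific input is needed here; this half is immediate.

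For the converse I would invoke part~(b) of Theorem~\ref{phisigmainv}. That result has two hypotheses: that $\phi_t$ vanish at a point of $\D$, which is now assumed, and that $\phi_t\circ\sigma_t^{-1}$ be a selfmap of $\D$. The second hypothesis is precisely the identity $\phi_t\circ\sigma_t^{-1} = \phi_{2i\Im(t)}$ recorded above, so both hypotheses hold and Theorem~\ref{phisigmainv}(b) yields that $\C$ is posinormal.

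I expect no genuine obstacle in this argument, since the substantive linear-fractional computation---identifying $\phi_t\circ\sigma_t^{-1}$ with the automorphism $\phi_{2i\Im(t)}$---has already been carried out before the statement. The single point worth emphasizing is that the selfmap condition of Theorem~\ref{phisigmainv} is met for \emph{every} parabolic $\phi_t$, whether or not $\phi_t$ is itself an automorphism, so that posinormality is controlled solely by whether $\phi_t$ has a zero in $\D$; this also subsumes the automorphic case, where $\phi_t$ is a bijection of $\D$ and hence necessarily attains the value $0$.
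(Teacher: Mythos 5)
Your proposal is correct and follows exactly the paper's own argument: necessity of the vanishing condition comes from Proposition~\ref{zero} (via the range inclusion in Theorem~\ref{posinormal}), and sufficiency comes from Theorem~\ref{phisigmainv}(b) together with the already-established identity $\phi_t\circ\sigma_t^{-1}=\phi_{2i\Im(t)}$. No differences worth noting.
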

\begin{proof} We have shown that for all members of the parabolic family $\ph_t$ given by (\ref{Pfam}), $\ph_t\circ \sigma_t^{-1}$ is a selfmap of $\D$. Thus, this proposition immediately from Proposition~\ref{zero} and Theorem~\ref{phisigmainv}.
\end{proof}  
 
 On the other hand, all members of the parabolic family (\ref{Pfam}) induce coposinormal composition operators.
 
 \begin{proposition} \label{coposinormald1}
The selfmap $\ph_t$  of parabolic type induces a coposinormal composition operator on $H^2$.  \end{proposition}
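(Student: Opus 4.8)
The plan is to invoke the coposinormality criterion of Theorem~\ref{sigmaphiinv}: it suffices to show that $\sigma_t\circ\phi_t^{-1}$ is a selfmap of $\D$. First I would record the two ingredients already visible from the halfplane picture. Since $\Phi_t(z)=z+t$, its inverse is $\Phi_t^{-1}(z)=z-t=\Phi_{-t}(z)$, whence $\phi_t^{-1}=\phi_{-t}$; and we are given that the adjoint of $\ph_t$ is $\sigma_t=\phi_{\bar t}$. Therefore $\sigma_t\circ\phi_t^{-1}=\phi_{\bar t}\circ\phi_{-t}$, and the task reduces to identifying this composition.

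Next I would compose these two maps in the right halfplane, where the computation is transparent. There the composition is simply $\Phi_{\bar t}\circ\Phi_{-t}(z)=(z-t)+\bar t=z+(\bar t-t)=z-2i\,\Im(t)$, a translation by a purely imaginary number. Pulling back by $\nu$, this says $\sigma_t\circ\phi_t^{-1}=\phi_{-2i\Im(t)}$. Because the translation amount $-2i\,\Im(t)$ has real part $0$, the map $\Phi_{-2i\Im(t)}$ is an automorphism of $\HP$, so $\phi_{-2i\Im(t)}$ is an automorphism of $\D$ (the identity when $t$ is real). In particular $\sigma_t\circ\phi_t^{-1}$ is a selfmap of $\D$, and Theorem~\ref{sigmaphiinv} then yields that $C_{\ph_t}$ is coposinormal.

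The one point requiring care---and the only place the argument could misfire---is the use of the composition law for the family $\{\phi_t\}$. The identity $\ph_s\circ\phi_t=\phi_{s+t}$ is stated in (\ref{SG}) only for parameters with nonnegative real part, whereas here the second factor carries the parameter $-t$, which has nonpositive real part. The resolution is that the underlying identity $\Phi_s\circ\Phi_{t'}=\Phi_{s+t'}$ is nothing but addition of translations, and so holds for all complex parameters as linear-fractional transformations of $\CP_\infty$; the restriction $\Re\ge 0$ is needed only to guarantee that an individual $\phi_s$ is a \emph{selfmap} of $\D$. Since the final parameter $-2i\,\Im(t)$ does have real part $0$, the resulting map $\phi_{-2i\Im(t)}$ lands back inside the selfmap family, so no difficulty remains. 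For this reason I would carry out the composition directly in the halfplane, as above, rather than manipulating the disk formulas, since that is where this subtlety is easiest to control.
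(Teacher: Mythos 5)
Your proposal is correct and follows essentially the same route as the paper: both reduce to showing $\sigma_t\circ\phi_t^{-1}=\phi_{-2i\Im(t)}$ is a selfmap of $\D$ and then invoke Theorem~\ref{sigmaphiinv}. The only cosmetic difference is that the paper sidesteps the parameter $-t$ by first writing $\sigma_t=\phi_{\bar t - t}\circ\phi_t$ (both parameters having nonnegative real part) and then composing with $\phi_t^{-1}$, whereas you compute directly in the halfplane and correctly justify why using $\Phi_{-t}$ is harmless.
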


\begin{proof} Assume $\Re(t) \ge 0$.  We have
$$
\phi_{\bar{t} -t} \phi_t = \phi_{\bar{t}} = \sigma_t;
$$
that is, $ \sigma_t =  \phi_{-2i{\tiny \Im(t)}} \circ \phi_t$. Hence, $\sigma_t\circ \phi_t^{-1} =  \phi_{-2i{\tiny \Im(t)}}$, a selfmap of $\D$, and the proposition follows from Theorem~\ref{sigmaphiinv}.
\end{proof}

Combining the two preceding propositions, we see that if $\phi_t$ vanishes at some point of $\D$, then $C_{\ph_t}$ is both posinormal and coposinormal.  It's easy to check that $\phi_t$ vanishes at a point of $\D$ if and only if  $0 \le \Re(t) < 1$.  {\it Thus, in particular,  if  $ 0 <  \Re(t) < 1$, then $C_{\ph_t}$ is both posinormal and coposinormal, but not every power of $C_{\ph_t}$  is posinormal, since $C_{\ph_t}^{n} = C_{\ph_{nt}}$ and eventually we have $\Re(nt) > 1$, so that $C_{\phi_{nt}}$ is not posinormal because $\phi_{nt}$ does not vanish on $\D$.  }   Corollary 1(b) of \cite{Kubrusly} states that for any bounded linear operator on a Hilbert space $\mathcal{H}$, if $T$ is posinormal and coposinormal, then $T^n$ is posinormal and coposinormal for every $n\ge 1$.   However, as we have seen, any composition operator on $H^2$ induced by $\phi_t$ with $0 < \Re(t) < 1$, provides a counter-example to this Corollary.   

We have focused on parabolic selfmaps $\ph_t$ of $\D$ having Denjoy-Wolff point $1$ without loss of generality.  

\begin{theorem}\label{paraThm}
Suppose \ph\ is a linear-fractional selfmap of \D\ with a fixed point $w$ on $\partial \D$, and $\ph'(w) = 1$. Then \C\ is coposinormal on $H^2$. Furthermore, \C\ is posinormal on $H^2$ if and only if $\ph(\beta) =0$ for some $\beta \in \D$.
\end{theorem}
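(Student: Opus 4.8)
The plan is to reduce the general parabolic case to the normalized situation $w = 1$ already settled in Propositions~\ref{d1posi} and~\ref{coposinormald1}, using the fact (noted in the introduction) that posinormality and coposinormality are unitary invariants. Set $\psi(z) := \bar{w}\,\ph(wz)$. Since $|w| = 1$, multiplication by $w$ is a self-bijection of \D, so $\psi$ is again a linear-fractional selfmap of \D; moreover $\psi(1) = \bar{w}\ph(w) = 1$ and $\psi'(1) = \ph'(w) = 1$, so $\psi$ is of parabolic type with Denjoy-Wolff point $1$. Exactly as at the start of the proof of Theorem~\ref{HCnonposi}, with the unitary $U := C_{wz}$ (whose adjoint is $C_{\bar{w}z}$) one verifies directly that $\C = U^* C_\psi U$; hence \C\ and $C_\psi$ are unitarily equivalent.

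Next, as established in the derivation of the family~(\ref{Pfam}), every parabolic selfmap of \D\ with Denjoy-Wolff point $1$ has the form $\phi_t$ for some $t$ with $\Re(t) \ge 0$ and $t \ne 0$; hence $\psi = \phi_t$ for a suitable such $t$. Proposition~\ref{coposinormald1} then shows $C_\psi$ is coposinormal, and Proposition~\ref{d1posi} shows $C_\psi$ is posinormal if and only if $\psi$ vanishes at a point of \D. Transporting both conclusions across the unitary equivalence $\C = U^* C_\psi U$, we obtain that \C\ is coposinormal and that \C\ is posinormal precisely when $C_\psi$ is.

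Finally I would convert the vanishing condition on $\psi$ into one on \ph. Because $\psi(z_0) = \bar{w}\,\ph(w z_0)$ and multiplication by $w$ permutes \D, we have $\psi(z_0) = 0$ for some $z_0 \in \D$ if and only if $\ph(\beta) = 0$ for some $\beta \in \D$ (take $\beta = w z_0$). Combined with the previous paragraph, this gives that \C\ is posinormal if and only if $\ph$ vanishes at a point of \D, completing the proof. I do not anticipate a genuine obstacle: the whole argument is a rotation-normalization parallel to the hyperbolic case, with all the analytic substance already carried by the two propositions. The only points demanding care are the side on which the rotation is composed in the identity $\C = U^* C_\psi U$, and the observation that a boundary fixed point with unit derivative forces the halfplane incarnation to be a pure translation, so that $\psi$ genuinely lies in the family~(\ref{Pfam}).
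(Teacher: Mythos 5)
Your proposal is correct and follows essentially the same route as the paper: conjugate by the unitary $C_{wz}$ to reduce to a parabolic map $\psi=\phi_t$ with Denjoy--Wolff point $1$, invoke Propositions~\ref{coposinormald1} and~\ref{d1posi}, and note that $\psi$ vanishes in $\D$ exactly when $\phi$ does. No gaps.
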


\begin{proof}  If $U:= C_{wz}$, then $U$ is unitary and $UC_\phi U^* = C_\psi$, where $\psi$ a is parabolic-type selfmap of $\D$ having Denjoy-Wolff point $1$, so that $\psi = \ph_t$ for some $t\ne0$ with $\Re(t)\ge 0$.  Being unitarily equivalent to $C_{\ph_t}$ for some $t$, we see that \C\ is coposinormal by  Proposition \ref{coposinormald1}.

We know that from Proposition \ref{d1posi} that $C_{\phi_t}$ is posinormal if and only if $\ph_t(\beta) = 0$ for some $\beta \in \D$. Since $\phi_t(z) = \psi(z) = \bar{w}\ph(wz)$ has a zero in \D\ if and only if \ph\ does, we see that \C\ is posinormal if and only if \ph\ has a zero in \D. 
\end{proof}

\section{Dilation Type I: Interior and Exterior Fixed Points }

We now turn our attention to composition operators $C_\phi$ induced by linear-fractional selfmaps $\phi$ of $\D$ having dilation type  (fixed point in $\D$) whose second fixed point lies outside of the closure $\D^-$ of $\D$.  This collection of composition operators includes those that are normal.  Recall that \C\ is normal on \Ht\ if and only if $\ph(z) = \alpha z$ for some $|\alpha| \leq 1$ (see, e.g.,\cite[Theorem 8.2]{cowen1995composition}).   In this section, we show that if $\phi$ is of dilation type and has its second fixed point outside of $\D^-$, then in order for $C_\phi$ to be posinormal or coposinormal $C_\phi$ must be similar to a normal composition operator via an automorphic composition operator; that is, $C_\phi = C_\tau^{-1}C_{\alpha z} C_\tau$, where $\alpha \in \D^-$ and $\tau$ is a conformal automorphism of $\D$ (and hence, $C_\tau^{-1} =C_{\tau^{-1}}$).  For the dilation-type symbols considered in this section, this similarity is both necessary and sufficient (with the additional condition for posinormality of having the symbol vanish at a point of $\D$): 
 \begin{itemize}
\item for $\phi$ vanishing at a point of $\D$,  $C_\phi$ is posinormal if and only if for some conformal automorphism $\tau$ of $\D$ and some $\alpha\in \D^-$, we have $C_\phi = (C_\tau)^{-1}C_{\alpha z} C_\tau$ is similar to the normal operator $C_{\alpha z}$;
\item   $C_\phi$ is coposinormal if and only if for some conformal automorphism $\tau$ of $\D$ and some $\alpha\in \D^-$, we have $C_\phi = (C_\tau)^{-1}C_{\alpha z} C_\tau$ is similar to the normal operator $C_{\alpha z}$.
\end{itemize}
Note that the operator equation 
$$
C_\phi = (C_\tau)^{-1}C_{\alpha z} C_\tau \ \text{is equivalent to the functional equation} \ \phi = \tau\circ \alpha \tau^{-1}.
$$
 In general, posinormality and coposinormality are not similarity invariant properties.  A  simple example is furnished by the following matrix operators acting on $\CP^2$:
$$
N= \begin{pmatrix} 1 & 0 \\ 0 & 0\end{pmatrix}, S = \begin{pmatrix} 1 & 0 \\-1 & 1\end{pmatrix}, \text{and}\  M = \begin{pmatrix} 1 & 0 \\  1 & 0\end{pmatrix}.
$$
Note $M = S^{-1}NS$ is similar to the normal operator $N$; however, the range of $M$ is the span of the vector $\begin{pmatrix} 1\\1\end{pmatrix}$ while the range of $M^*$ is the span of   $\begin{pmatrix} 1\\0\end{pmatrix}$.  Thus the range of $M$ is neither a subset nor a superset of the range of $M^*$, and we see $M$ is neither posinormal nor coposinormal.

\begin{lemma}\label{adjauto}  Let $\tau$ be a conformal automorphism of $\D$.   If the linear-fractional selfmap $\phi$ of $\D$ has adjoint $\sigma$, then the selfmap $\tau\circ\phi\circ \tau^{-1}$ had adjoint $\tau\circ\sigma\circ \tau^{-1}$.  
\end{lemma}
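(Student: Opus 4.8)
The plan is to reduce everything to a single clean description of the adjoint operation, namely reflection across the unit circle. Write $\rho(z) = 1/\bar{z}$ for the (anti-holomorphic) inversion of $\CP_\infty$ in $\partial\D$; it is an involution that interchanges $\D$ and $\E$ and fixes $\partial\D$ pointwise. In the proof of Theorem~\ref{sigmaphiinv} it was shown that $\sigma = \phi_e^{-1}$, where $\phi_e(z) = 1/\overline{\phi(1/\bar{z})}$. Since $\phi_e = \rho\circ\phi\circ\rho$ and $\rho$ is an involution, this says precisely
$$
\sigma = \rho\circ\phi^{-1}\circ\rho.
$$
So the whole lemma becomes a statement about how this conjugation-by-$\rho$ operation interacts with the automorphism $\tau$.

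The key step I would isolate is that $\rho$ commutes with every conformal automorphism of $\D$, that is, $\rho\circ\tau = \tau\circ\rho$. Geometrically this is immediate: an automorphism of $\D$ carries $\partial\D$ to $\partial\D$ preserving sides, hence intertwines reflection in $\partial\D$ with itself. Concretely I would verify it by writing $\tau(z) = e^{i\theta}(w-z)/(1-\bar{w}z)$ with $|w|<1$ and computing $\rho\circ\tau$ and $\tau\circ\rho$ directly; both collapse to $e^{i\theta}(1-w\bar{z})/(\bar{z}-\bar{w})$. This is the one genuine computation and is the main (mild) obstacle\textemdash everything else is formal. Note also that $\tau^{-1}$ is again an automorphism of $\D$, so it too commutes with $\rho$; equivalently, $\rho\circ\tau=\tau\circ\rho$ gives $\tau^{-1}\circ\rho = \rho\circ\tau^{-1}$.

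With these two facts in hand the lemma is a one-line calculation. Set $\psi = \tau\circ\phi\circ\tau^{-1}$. Applying the reflection description of the adjoint to $\psi$ and using $(\tau\circ\phi\circ\tau^{-1})^{-1} = \tau\circ\phi^{-1}\circ\tau^{-1}$, the adjoint of $\psi$ is
$$
\rho\circ\tau\circ\phi^{-1}\circ\tau^{-1}\circ\rho.
$$
Commuting the outer reflections past $\tau$ and $\tau^{-1}$ via $\rho\circ\tau=\tau\circ\rho$ and $\tau^{-1}\circ\rho=\rho\circ\tau^{-1}$ rewrites this as
$$
\tau\circ(\rho\circ\phi^{-1}\circ\rho)\circ\tau^{-1} = \tau\circ\sigma\circ\tau^{-1},
$$
which is exactly the claim. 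Since all maps involved are linear-fractional on $\CP_\infty$, there are no domain issues in forming these compositions, so the argument is complete.
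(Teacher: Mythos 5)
Your argument is correct and is essentially the paper's own proof: the identities $\tau^{\pm1}(1/\bar z)=1/\overline{\tau^{\pm1}(z)}$ that the paper displays are precisely your commutation relations $\rho\circ\tau^{\pm1}=\tau^{\pm1}\circ\rho$, and the paper's chain of equalities is exactly your one-line conjugation computation $\rho\circ\tau\circ\phi^{-1}\circ\tau^{-1}\circ\rho=\tau\circ(\rho\circ\phi^{-1}\circ\rho)\circ\tau^{-1}$ written out coordinatewise. (One harmless slip: both $\rho\circ\tau$ and $\tau\circ\rho$ collapse to $e^{i\theta}(w\bar z-1)/(\bar z-\bar w)$, not $e^{i\theta}(1-w\bar z)/(\bar z-\bar w)$; the two expressions you compute are still equal, so nothing is affected.)
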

\begin{proof}  We know that $\tau$ has the form
\begin{equation}\label{taueq}
\tau(z) =  \lambda \frac{p - z}{1-\bar{p} z}, \text{where} \ p\in \D, \text{and} \ \lambda \ \text{is a unimodular constant}.
\end{equation}
Note $\tau^{-1}(z) = (p-\bar{\lambda} z)/(1-\bar{p} \bar{\lambda} z) = \bar{\lambda} (\lambda p - z)/(1-\overline{\lambda p} z)$.

 Observe that 
  \begin{equation}\label{autobs}
  (a)\ \  \tau\left(\frac{1}{\bar{z}}\right) = \frac{1}{\overline{\tau(z)}} \quad \text{and} \quad (b) \ \  \tau^{-1}\left(\frac{1}{\bar{z}}\right) = \frac{1}{\overline{\tau^{-1}(z)}}.
\end{equation}
  Then, employing the characterization of the adjoint $\sigma$ of $\phi$ of found in  \cite{Cow} and used in the proof of Theorem~\ref{sigmaphiinv}, namely,
  $$
  \sigma(z) = \frac{1}{\overline{\phi^{-1}\left(\frac{1}{\bar{z}}\right)}},
  $$
  we have that the  adjoint $\sigma_1$  of $\tau\circ\phi\circ\tau^{-1}$ is given by
   \begin{align*}
    \sigma_1(z)  = \frac{1}{\overline{(\tau\circ \phi\circ\tau^{-1})^{-1}\left(\frac{1}{\bar{z}}\right)}}  & =  \frac{1}{\overline{(\tau\circ \phi^{-1}\circ\tau^{-1})\left(\frac{1}{\bar{z}}\right)}}\\
      & =  \frac{1}{\overline{(\tau\circ \phi^{-1})\left(\tau^{-1}\left(\frac{1}{\bar{z}}\right)\right)}}\\
          & =  \frac{1}{\overline{\tau \left(\phi^{-1}\left(\frac{1}{\overline{\tau^{-1}(z)})}\right)\right)}} \quad (\text{by \ref{autobs} (b)})\\
            & = \tau\left(\frac{1}{\overline{ \phi^{-1}\left(\frac{1}{\overline{\tau^{-1}(z)})}\right)} }\right)\quad (\text{by \ref{autobs} (a)})\\
      & = (\tau\circ\sigma\circ\tau^{-1})(z),
    \end{align*}
    as desired.  
    \end{proof}

    We remark that a computationally intense proof of the preceding lemma may be constructed by
  \begin{itemize}
  \item[(i)]  letting $\phi(z) = (az + d)/(cz + d)$, so that $\sigma(z) = (\bar{a}z-\bar{c})/(-\bar{b}z + \bar{d})$ by Cowen's adjoint formula (see Theorem~\ref{adjformula});
  \item[(ii)] computing $\tau\circ\phi\circ \tau^{-1}$ using (\ref{taueq}) and the formula for $\phi$ from (i); 
  \item[(iii)] computing $\tau\circ\sigma\circ \tau^{-1}$ using (\ref{taueq}) and the formula for $\sigma$ from (i);
  \item[(iv)] verifying that applying Cowen's adjoint formula to the linear fractional map computed in (ii) yields the formula computed in (iii).
  \end{itemize}
  We now show that posinormality and coposinormality are preserved under similarities induced by automorphic composition operators (with the additional condition for posinormality of having the symbol vanish at a point of $\D$).
    
    \begin{proposition} \label{simProp} Let $\phi$ be a (nonconstant) linear-fractional selfmap of $\D$, and let $\tau$ be a conformal automorphism of $\D$.
    \begin{itemize}
    \item[(a)] Suppose that $C_\phi:H^2\rightarrow H^2$  is coposinormal and that $C_\psi$ is similar to $C_\phi$ via $C_\tau$: $C_\psi = (C_\tau)^{-1} C_\phi C_\tau$. Then $C_\psi$ is also coposinormal on $H^2$.
    \item[(b)]  Suppose that $C_\phi:H^2\rightarrow H^2$  is posinormal and that $C_\psi$ is similar to $C_\phi$ via $C_\tau$: $C_\psi =  (C_\tau)^{-1} C_\phi C_\tau$.  If $\psi$ vanishes at a point of $\D$, then $C_\psi$ is also posinormal on $H^2$.
    \end{itemize}
    \end{proposition}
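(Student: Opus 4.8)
The plan is to reduce both parts to the functional characterizations of posinormality and coposinormality from Theorems~\ref{phisigmainv} and~\ref{sigmaphiinv}, transporting the relevant selfmap conditions across the automorphic conjugation by means of Lemma~\ref{adjauto}. The first step is to unwind the operator similarity into a functional identity: since $\tau$ is a conformal automorphism, $C_\tau$ is invertible with $(C_\tau)^{-1} = C_{\tau^{-1}}$, and a direct computation shows that $C_\psi = (C_\tau)^{-1}C_\phi C_\tau$ is equivalent to $\psi = \tau\circ\phi\circ\tau^{-1}$. In particular $\psi$ is again a nonconstant linear-fractional selfmap of $\D$, so the theorems of Section~2 apply to it. Writing $\sigma$ for the adjoint of $\phi$, Lemma~\ref{adjauto} identifies the adjoint of $\psi$ as $\sigma_\psi = \tau\circ\sigma\circ\tau^{-1}$, which is precisely the ingredient that makes the conjugation interact well with the adjoint structure (and explains why similarity invariance, which fails in general, can succeed here).

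For part (a) I would apply Theorem~\ref{sigmaphiinv}: coposinormality of $C_\psi$ is equivalent to $\sigma_\psi\circ\psi^{-1}$ being a selfmap of $\D$. Using $\psi^{-1} = \tau\circ\phi^{-1}\circ\tau^{-1}$ together with $\sigma_\psi = \tau\circ\sigma\circ\tau^{-1}$, the inner $\tau^{-1}\circ\tau$ pair cancels and one finds $\sigma_\psi\circ\psi^{-1} = \tau\circ(\sigma\circ\phi^{-1})\circ\tau^{-1}$. Since $C_\phi$ is coposinormal, Theorem~\ref{sigmaphiinv} gives that $\sigma\circ\phi^{-1}$ is a selfmap of $\D$; conjugating by the automorphism $\tau$, whose inverse also maps $\D$ onto $\D$, preserves this, so $\sigma_\psi\circ\psi^{-1}$ is a selfmap of $\D$ and $C_\psi$ is coposinormal.

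Part (b) runs along the same lines but with Theorem~\ref{phisigmainv}. The analogous cancellation gives $\psi\circ\sigma_\psi^{-1} = \tau\circ(\phi\circ\sigma^{-1})\circ\tau^{-1}$. Posinormality of $C_\phi$ yields, via part (a) of Theorem~\ref{phisigmainv}, that $\phi\circ\sigma^{-1}$ is a selfmap of $\D$, and conjugating by $\tau$ shows $\psi\circ\sigma_\psi^{-1}$ is a selfmap of $\D$. At this point I would invoke part (b) of Theorem~\ref{phisigmainv}: together with the standing hypothesis that $\psi$ vanishes at a point of $\D$, the selfmap condition forces $C_\psi$ to be posinormal.

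The one genuine subtlety, and the reason part (b) carries an extra hypothesis absent from part (a), is that Theorem~\ref{phisigmainv} recovers posinormality only when the vanishing condition is also in hand. Posinormality of $C_\phi$ guarantees via Proposition~\ref{zero} that $\phi$ vanishes somewhere in $\D$, say $\phi(\beta)=0$, but then $\psi(\tau(\beta)) = \tau(0)$, which is zero only when $\tau$ fixes $0$; for a general automorphism $\psi$ need not vanish on $\D$. Hence the vanishing of $\psi$ cannot be inferred and must be assumed, whereas coposinormality, being a clean equivalence in Theorem~\ref{sigmaphiinv} with no accompanying zero condition, transfers unconditionally. I expect the only place requiring care to be exactly this asymmetry, along with the correct bookkeeping of inverses and conjugations; the computations themselves are routine once Lemma~\ref{adjauto} supplies the formula for $\sigma_\psi$.
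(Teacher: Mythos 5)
Your proposal is correct and follows essentially the same route as the paper: unwind the similarity to $\psi=\tau\circ\phi\circ\tau^{-1}$, use Lemma~\ref{adjauto} to get $\sigma_\psi=\tau\circ\sigma\circ\tau^{-1}$, and transport the selfmap conditions of Theorems~\ref{phisigmainv} and~\ref{sigmaphiinv} through conjugation by $\tau$. Your closing remark on why the vanishing hypothesis must be assumed rather than inferred in part (b) is accurate and consistent with the paper's treatment.
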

    
\begin{proof}  Suppose that $C_\phi$  is coposinormal, that $C_\psi = (C_\tau)^{-1} C_\phi C_\tau$, and that $\sigma$ is the adjoint of $\phi$ while $\sigma_1$ is the adjoint of $\psi$.   Note $\psi = \tau\circ\phi\circ\tau^{-1}$, so that  by Lemma~\ref{adjauto}, we have $\sigma_1 =  \tau\circ\sigma\circ\tau^{-1}$. 

Because $C_\phi$ is coposinormal, we know that $\sigma\circ\phi^{-1}$ is a selfmap of $\D$ by Theorem~\ref{sigmaphiinv}.  Hence, because $\tau$ and $\tau^{-1}$ map $\D$ onto $\D$, we have
$$
\tau\circ( \sigma\circ\phi^{-1})\circ\tau^{-1}  \ \text{is a selfmap of}\ \D.
$$
Because  $\tau\circ( \sigma\circ\phi^{-1})\circ\tau^{-1}= \sigma_1\circ\psi^{-1}$, we see $C_\psi$ is coposinormal by Theorem~\ref{sigmaphiinv}.  

 Now suppose that  $C_\phi$  is posinormal, that $C_\psi = (C_\tau)^{-1} C_\phi C_\tau$, that $\psi$ vanishes at a point of $\D$,  and that $\sigma$ is the adjoint of $\phi$ while $\sigma_1$ is the adjoint of $\psi$.   Just as in the proof of part (a) of this proposition, we have $\psi = \tau\circ\phi\circ\tau^{-1}$, so that  by Lemma~\ref{adjauto}, we have $\sigma_1 =  \tau\circ\sigma\circ\tau^{-1}$.  Because $C_\phi$ is posinormal, we have from Theorem~\ref{phisigmainv}, part (a),  that $\phi\circ\sigma^{-1}$ is a selfmap of $\D$.  Hence, because $\tau$ and $\tau^{-1}$ map $\D$ onto $\D$, we have
$$
\tau\circ( \phi\circ\sigma^{-1})\circ\tau^{-1}  \ \text{is a selfmap of}\ \D.
$$
Because  $\tau\circ( \phi\circ\sigma^{-1})\circ\tau^{-1}= \psi\circ\sigma_1^{-1}$, and $\psi$ vanishes at some point in $\D$, we see $C_\psi$ is posinormal by Theorem~\ref{phisigmainv}, part (b).  
\end{proof}

  Keep in mind that we are assuming our linear-fractional symbols to be nonconstant.  Observe that the characterization of posinormality provided in the next theorem continues to be valid, by Proposition~\ref{ConstantSymbol}, if we assume $\phi$ is constant, except we have $w = \alpha = 0$ (so that $|w| = |\alpha|)$; also, if $|\alpha| = 1$, then $C_{\tau\circ\alpha\tau}$ will be invertible hence posinormal (but in this case $\phi=\tau\circ\alpha\tau$ is an elliptic automorphism so that it would inaccurate to describe its fixed point $w\in \D$ as its Denjoy-Wolff point).   
    \begin{theorem}\label{S4posi}
Suppose  that \ph\ is a linear-fractional selfmap of $\D$ with its Denjoy-Wolff point $w$  in \D\ and its other fixed point outside of $\D^-$. Then $C_\phi:H^2\rightarrow H^2$ is posinormal if and only if
\begin{quotation}
$\ph(z) = \tau \circ \alpha \tau$, where $|\alpha| < 1$, $\tau = \frac{w-z}{1-\overline{w}z}$, and $|w| < |\alpha|$. 
\end{quotation}
\end{theorem}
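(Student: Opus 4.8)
The plan is to reduce everything to the case where the Denjoy-Wolff point sits at the origin, where the asserted special form becomes a \emph{pure dilation}. Write $\tau(z) = (w-z)/(1-\bar{w}z)$, the involutive automorphism of $\D$ interchanging $0$ and $w$, and set $\psi = \tau\circ\phi\circ\tau$. Since $\phi$ fixes $w$ and has its other fixed point in $\CP_\infty\setminus\D^-$, the map $\psi$ fixes $0$ and has its second fixed point outside $\D^-$; moreover $\phi = \tau\circ\alpha\tau$ holds precisely when $\psi(z)=\alpha z$. So the theorem is equivalent to the statement that, among linear-fractional selfmaps $\psi$ of $\D$ fixing $0$ with second fixed point outside $\D^-$, $C_\psi$ is posinormal if and only if $\psi(z)=\alpha z$ for some $|\alpha|<1$, with the translation of ``$\phi$ vanishes in $\D$'' being exactly $|w|<|\alpha|$.

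For sufficiency, suppose $\phi=\tau\circ\alpha\tau$ with $|\alpha|<1$ and $|w|<|\alpha|$. Solving $\phi(z)=0$ amounts to $\tau(z)=w/\alpha$, which has a solution in $\D$ precisely because $|w/\alpha|<1$; hence $\phi$ vanishes at a point of $\D$. Because $\tau$ is an involution, $C_\phi=(C_\tau)^{-1}C_{\alpha z}C_\tau$, so $C_\phi$ is similar, via the automorphic composition operator $C_\tau$, to $C_{\alpha z}$, which is normal and therefore posinormal. Proposition~\ref{simProp}(b) then yields that $C_\phi$ is posinormal.

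For necessity, assume $C_\phi$ is posinormal. By Corollary~\ref{zeroC}, $\phi$ vanishes at a point of $\D$, and by Theorem~\ref{phisigmainv}(a) the map $\phi\circ\sigma^{-1}$ is a selfmap of $\D$, where $\sigma$ is the adjoint of $\phi$. By Lemma~\ref{adjauto} the adjoint of $\psi=\tau\circ\phi\circ\tau$ is $\sigma_1=\tau\circ\sigma\circ\tau$, so that $\psi\circ\sigma_1^{-1}=\tau\circ(\phi\circ\sigma^{-1})\circ\tau$ is again a selfmap of $\D$. Writing the normalized form $\psi(z)=az/(cz+1)$ (with $a\neq 0$) of a selfmap fixing $0$, Cowen's formula (Theorem~\ref{adjformula}) gives the affine adjoint $\sigma_1(z)=\bar{a}z-\bar{c}$, whence $\sigma_1^{-1}(z)=(z+\bar{c})/\bar{a}$ and $\Theta:=\psi\circ\sigma_1^{-1}$ is $\Theta(z)=(az+a\bar{c})/(cz+\bar{a}+|c|^2)$. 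It remains to show that $\Theta$ being a selfmap of $\D$ forces $c=0$, for then $\psi(z)=\alpha z$ with $\alpha:=a$, giving $\phi=\tau\circ\alpha\tau$; the identity $\phi'(w)=\alpha$ together with $w$ being an interior Denjoy-Wolff point of a nonautomorphism gives $|\alpha|<1$, and the vanishing of $\phi$ in $\D$ gives $|w|<|\alpha|$.

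This final step is the crux, and I expect it to be the main obstacle. After conjugating $\psi$ by a rotation (which fixes $0$, preserves posinormality by unitary invariance, and by Lemma~\ref{adjauto} transforms $\sigma_1$ compatibly) I may assume $c\geq 0$. A linear-fractional selfmap of $\D$ must both (i) have its pole outside $\D^-$ and (ii) satisfy $|\Theta|\leq 1$ on $\partial\D$. Condition (i) reads $\lvert\bar{a}+|c|^2\rvert\geq |c|$, while (ii), upon setting $z=e^{i\theta}$ and minimizing the resulting real-linear expression in $cz$, reduces to an explicit inequality among $|a|$, $|c|$, and $\Re(a)$. The heart of the matter is to show that (i) and (ii) cannot both hold once $c\neq 0$, using the standing constraints $|a|+|c|\leq 1$ (each of $\psi$ and $\sigma_1$ is a selfmap of $\D$) and $|a-1|>|c|$ (the second fixed point lies strictly outside $\D^-$). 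The subtlety is that when $c\neq 0$ the obstruction to $\Theta$ being a selfmap is sometimes a pole inside $\D^-$ and sometimes a boundary value of modulus exceeding $1$, so conditions (i) and (ii) must be played off against each other rather than treated separately.
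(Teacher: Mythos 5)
Your overall architecture is exactly the paper's: reduce to Denjoy--Wolff point $0$ by conjugating with the involution $\tau$, get sufficiency from the similarity $C_\phi=(C_\tau)^{-1}C_{\alpha z}C_\tau$ together with Proposition~\ref{simProp}(b), and for necessity use Corollary~\ref{zeroC}, Theorem~\ref{phisigmainv}(a), and Lemma~\ref{adjauto} to conclude that $\psi\circ\sigma_1^{-1}$ is a selfmap of $\D$ for the normalized symbol $\psi(z)=az/(cz+1)$. All of that is correct, and your computation of $\Theta=\psi\circ\sigma_1^{-1}$ is right. But the proof is not complete: the entire content of the necessity direction is the claim that $\Theta$ being a selfmap of $\D$ forces $c=0$, and you explicitly leave that step as a plan (``the heart of the matter is to show that (i) and (ii) cannot both hold once $c\neq 0$'') rather than carrying it out. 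As written, the playing-off of the pole condition against the boundary-modulus condition is exactly the part that could fail or turn into an intractable inequality, so the argument has a genuine gap at its crux.

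For comparison, the paper avoids the two-condition boundary analysis entirely by one more conjugation: with $p=(1-a)/c$ the exterior fixed point, it conjugates $\psi$ by the self-inverse automorphism $\tau_1$ interchanging $0$ and $1/\bar p$. This makes the conjugated symbol \emph{affine}, $\tilde\psi(z)=az+\bar c\,\eta$ with $|\eta|=1$, while $\mu:=\tau_1\circ\sigma_1^{-1}\circ\tau_1$ becomes $z\mapsto z/(\bar a+c\bar\eta z)$, and the selfmap condition becomes the single geometric inclusion $a\,\mu(\D)+\bar c\,\eta\subseteq\D$. If $|c|\ge|a|$ then $\mu$ is unbounded on $\D$ and the inclusion fails; if $0<|c|<|a|$ then $\mu(\D)$ is a disk of radius $|a|/(|a|^2-|c|^2)>1/|a|$, so $a\,\mu(\D)$ is a disk of radius exceeding $1$, and no translate of such a disk fits in $\D$. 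Either adopt this conjugation trick, or actually execute your inequality analysis in full; until one of these is done, the ``only if'' half of the theorem is unproved.
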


  \begin{proof}   Note that $\tau$ is a self-inverse automorphism of $\D$. Suppose  $\ph(z) = \tau \circ \alpha \tau$, where $|\alpha| < 1$, $\tau = \frac{w-z}{1-\overline{w}z}$, and $|w| < |\alpha|$.  Then $\phi$ vanishes at the point $\tau(w/\alpha)\in \D$ and $C_\phi$ is similar to the normal composition operator $C_{\alpha z}$ via the automorphic composition operator $C_\tau$: $C_\phi  = (C_\tau)^{-1} C_{\alpha z} C_\tau =C_\tau C_{\alpha z} C_\tau$. Thus, $C_\phi$ is posinormal by Proposition~\ref{simProp}, Part (b).  
  
  Conversely, suppose that $C_\phi$ is posinormal and that $\sigma$ is the adjoint of $\phi$.  Let $\tau = \frac{w-z}{1-\overline{w}z}$ (where $w$ is the Denjoy-Wolff point of $\phi$).  Let  $C_\psi= (C_\tau)^{-1} C_\phi C_\tau =  (C_\tau C_\phi C_\tau)$, so that $\psi = \tau\circ\phi\circ \tau$ and $C_\psi$ is similar to $C_\phi$ via the automorphic composition operator $C_\tau$.  Because $\tau$ maps $0$ to the Denjoy-Wolff point of $\phi$, we see $\psi$ has Denjoy-Wolff point $0$.  Hence,
  $$
  \psi(z) = \frac{az}{1-cz}
  $$
  for some constants $a$ and $c$.   Note $\psi'(0) = a$ so that $|a| < 1$ because $0$ is the Denjoy-Wolfff point of $\psi$ (or apply Schwarz's Lemma).
  Because, $\psi = \tau\circ\phi\circ \tau$ vanishes at a point of $\D$ (namely $0$) and $C_\phi$ is posinormal, we see from Proposition~\ref{simProp} that $C_\psi$ is posinormal.    We claim that the posinormality of $C_\psi$ yields $c = 0$, so that $\psi(z) = az$, where $a\in \D$.  Thus, if our claim is valid, then because $\psi = \tau\circ\phi\circ \tau$, we have $az =  \tau\circ\phi\circ \tau$; equivalently, that $\phi = \tau \circ a \tau$  and since $\phi$ vanishes at  point $\beta\in \D$ (because $C_\phi$ is posinormal), we must have $0=\tau(a\tau(\beta))$, so that $|w| = |a\tau(\beta)| = |a||\tau(\beta)| < |a|$.  Thus, our proof will be complete if we can establish our claim that $c$ must be 0.  
  
  Suppose, in order to obtain a contradiction, that $c \ne 0$ and $\psi(z) = az/(1-cz)$ induces a posinormal composition operator on \Ht.   
   A computation shows that the exterior fixed point $p$ of $\phi$ is given by $p = (1-a)/c$.  Let $\tau_1$ to be the self-inverse automorphism of \D\ given by
 $$
 \tau_1(z) = \frac{\overline{\frac{1}{p}} - z}{1- \frac{1}{p} z} = \frac{\frac{\bar{c}}{1-\bar{a}} - z}{1- \frac{c}{1-a} z}.
 $$
 A computation shows that
\begin{equation}\label{TE1}
\tpsi(z): = (\tau_1\circ \psi\circ \tau_1)(z) = a z + \bar{c} \, \eta,
\end{equation}  
where $\eta$ is the unimodular constant $(1-a)/(1-\bar{a})$.     Note that for $\psi(z) = az/(1-cz)$, $C_\psi^* = C_{\sigma_1} T_h^*$, where $\sigma_1(z) = \bar{a}z + \bar{c}$ and $h(z) = 1 - c z$.  Also note that
\begin{equation}\label{TE2}
\mu(z): = (\tau_1\circ \sigma_1^{-1} \circ \tau_1)(z)  = \frac{z}{\rule{0in}{0.12in}\bar{a} +c\, \bar{\eta} z}.
\end{equation}

 We are assuming that $C_\psi$ is posinormal and $c \ne 0$. Because it's posinormal, by Theorem \ref{phisigmainv} we conclude $\psi\circ \sigma_1^{-1}$ must be a selfmap of \D.  We have
$$
(\psi\circ\sigma_1^{-1})(\D) \subseteq \D;
$$
hence,
$$
(\tau_1 \circ \psi \circ \tau_1 \circ \tau_1\circ  \sigma_1^{-1}\circ \tau_1)(\D)  \subseteq \D \quad \text{so that via Equations (\ref{TE1}) and (\ref{TE2}})\quad  (\tpsi\circ \mu)(\D) \subseteq  \D.
$$
Using the definition of $\tpsi$, we see that the inclusion on the right above yields
\begin{equation}\label{cis01}
  a \mu(\D) +\bar{c}\, \eta \subseteq \D.
\end{equation}
Recall that $\mu(z) = \frac{z}{\rule{0in}{.12in} \bar{a} + c \, \bar{\eta} z}$.  If $|c\, \bar{\eta}| \ge |\bar{a}|$, equivalently, $|c| \ge |a|$ (recall we are assuming $c \ne 0$), then $\mu$ is unbounded on $\D$ and the inclusion $(\ref{cis01})$ cannot hold.  

Now suppose that $|c| < |a|$.  We claim the image of $\D$ under $\mu$ is a disk of radius $\frac{|a|}{|a|^2 -|c|^2} > \frac{1}{|a|}$ (because we are assuming $c \ne 0$ and $|c| < |a|$). Recall $|a| < 1$.   Hence, if our claim is valid, then $a\mu(\D)$ is a disk having radius exceeding $1$, and,  being a translate of a disk whose radius exceeds $1$,
$$
 a \mu(\D) +\bar{c}\, \eta \ \text{cannot be contained in}\ \D,
$$
which contradicts (\ref{cis01}) above.  This contradiction shows our proof is complete  provided we establish the claim above regarding the radius of the disk $\mu(\D)$, which we now do.  

The linear-fractional transformation $\mu(z) = \frac{z}{\rule{0in}{0.12in}\bar{a} +c\, \bar{\eta} z}$ maps the line through the origin and $\frac{\bar{a}}{\rule{0in}{0.12in} c\, \bar{\eta}}$ to the line $L$ through the origin and $\eta/c$.   The circle $\mu({\partial \D})$ intersects the line $L$ (at right angles) at  
$$
\mu\left(\frac{\eta \bar{a}|c|}{|a|c}\right) =   \frac{\eta|c|/c}{|a| + |c|} \ \text{and at}\  \mu\left(-\frac{\eta \bar{a}|c|}{|a|c}\right) =  \frac{-\eta|c|/c}{|a| - |c|}.
$$
Thus $\mu(\D)$ is a circle of radius
$$
\frac{1}{2}\left|\frac{\eta|c|}{c}\left(  \frac{1}{|a| + |c|} \ +  \frac{1}{|a| - |c|}\right)\right| = \frac{|a|}{|a|^2 - |c|^2},
$$
as claimed, completing the proof.

  \end{proof} 
  
 Regarding the following theorem, note that for all $|\alpha|\le 1$, $C_{\tau\circ \alpha\tau}$ will be coposinormal but the cases $\alpha = 0$ and $|\alpha| = 1$  are not consistent, respectively, with our standing assumption that $\phi$ is nonconstant and with the hypothesis that $\phi$ has a Denjoy-Wolff point in $\D$.  

\begin{theorem}\label{S4coposi}
Suppose  that \ph\ is a linear-fractional selfmap of $\D$ with Denjoy-Wolff point $w$ inside \D\ and its other fixed point outside of $\D^-$. Then $C_\phi:H^2\rightarrow H^2$ is coposinormal if and only if
\begin{quotation}
$\ph(z) = \tau \circ \alpha \tau$, where $|\alpha| < 1$ and $\tau = \dfrac{w-z}{1-\overline{w}z}$.
\end{quotation}

\end{theorem}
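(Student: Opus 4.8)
The plan is to mirror the proof of Theorem~\ref{S4posi}, using Theorem~\ref{sigmaphiinv} (coposinormality of $C_\phi$ is equivalent to $\sigma\circ\phi^{-1}$ being a selfmap of $\D$) together with the similarity invariance of coposinormality furnished by Proposition~\ref{simProp}(a). For the forward implication, suppose $\phi = \tau\circ\alpha\tau$ with $|\alpha|<1$, where $\tau(z) = (w-z)/(1-\bar w z)$ is self-inverse. Then $\phi = \tau\circ(\alpha z)\circ\tau^{-1}$, so $C_\phi = (C_\tau)^{-1}C_{\alpha z}C_\tau$ is similar to $C_{\alpha z}$ via the automorphic composition operator $C_\tau$. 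Since $C_{\alpha z}$ is normal (see \cite[Theorem 8.2]{cowen1995composition}), it is coposinormal, and Proposition~\ref{simProp}(a) then gives that $C_\phi$ is coposinormal. Note that, in contrast with the posinormal case, no side condition on $|w|$ appears, precisely because part (a) of Proposition~\ref{simProp} carries no vanishing hypothesis.

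For the converse, I would first normalize the fixed points by the same similarity. Because $\tau$ interchanges $0$ and $w$, the map $\psi := \tau\circ\phi\circ\tau$ has Denjoy--Wolff point $0$ and hence has the form $\psi(z) = az/(1-cz)$ with $|a|<1$; moreover $C_\psi = (C_\tau)^{-1}C_\phi C_\tau$ is coposinormal by Proposition~\ref{simProp}(a). Since $\phi = \tau\circ\psi\circ\tau$, the theorem reduces to showing that coposinormality of $C_\psi$ forces $c = 0$: for then $\psi(z) = az$ and $\phi = \tau\circ a\tau$, with $|a| = |\alpha| < 1$, as required.

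To prove this claim I would apply Theorem~\ref{sigmaphiinv}: coposinormality of $C_\psi$ requires $\sigma_1\circ\psi^{-1}$ to be a selfmap of $\D$, where $\sigma_1(z) = \bar a z + \bar c$ is the adjoint of $\psi$ and $\psi^{-1}(z) = z/(a+cz)$. A direct computation gives
$$
(\sigma_1\circ\psi^{-1})(z) = \frac{(\bar a + |c|^2)z + a\bar c}{cz + a}.
$$
Arguing by contradiction with $c \ne 0$, I split into two cases just as in Theorem~\ref{S4posi}. If $|c|\ge|a|$, the pole $-a/c$ of this linear-fractional map lies in $\D^-$, so the map is unbounded on $\D$ and cannot be a selfmap. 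If $|c|<|a|$, the pole lies outside $\D^-$, so the map carries $\D$ onto a bona fide disk whose radius works out to $|a|^2/(|a|^2-|c|^2)$; this can be read off from the standard formula for the image of $\D$ under a linear-fractional transformation or, as in the proof of Theorem~\ref{S4posi}, by evaluating at the two boundary points sent to the endpoints of a diameter. Since $c\ne0$ makes $|a|^2-|c|^2 < |a|^2$, this radius exceeds $1$, so the image cannot be contained in $\D$. Both cases contradict the selfmap property, so $c = 0$.

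The main obstacle is the converse's core step: obtaining the explicit linear-fractional form of $\sigma_1\circ\psi^{-1}$ and identifying the radius $|a|^2/(|a|^2-|c|^2)$ of its image disk, which is exactly the quantity that makes the contradiction visible. The remaining ingredients---the fixed-point normalization and the invocation of Proposition~\ref{simProp}(a)---run in close parallel to the posinormality argument and should present no difficulty.
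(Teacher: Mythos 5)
Your proposal is correct and follows essentially the same route as the paper: the forward direction via similarity to the normal operator $C_{\alpha z}$ and Proposition~\ref{simProp}(a), and the converse by normalizing to $\psi(z)=az/(1-cz)$, invoking Theorem~\ref{sigmaphiinv}, and ruling out $c\neq 0$ by showing the image of $\D$ under $\sigma_1\circ\psi^{-1}$ is either unbounded or a disk of radius $|a|^2/(|a|^2-|c|^2)>1$. The only cosmetic difference is that you compute the composite $\sigma_1\circ\psi^{-1}$ as a single linear-fractional map, while the paper writes it as $\bar a\,\psi^{-1}(\D)+\bar c$ and reuses the radius computation for the map $\mu$ from the proof of Theorem~\ref{S4posi}; the resulting radius is the same.
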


\begin{proof}   

Just as in the proof of Theorem~\ref{S4posi}, if $\ph(z)$ takes the form  $\tau \circ \alpha \tau$, where $0< |\alpha| < 1$ and $\tau = \frac{w-z}{1-\overline{w}z}$ is automorphism of $\D$, then $C_\phi$ is similar to the normal composition operator $C_{\alpha z}$ via the automorphic composition operator $C_\tau$, and thus, $C_\phi$ is coposinormal by Proposition~\ref{simProp}, Part (a).  
  
   Conversely, suppose that $\phi$ is coposinormal and that $\sigma$ is the adjoint of $\phi$. Again, we proceed as in the proof of Theorem~\ref{S4posi}.  Let $\tau = \frac{w-z}{1-\overline{w}z}$, and  let  $C_\psi= (C_\tau)^{-1} C_\phi C_\tau =  (C_\tau C_\phi C_\tau)$, so that $\psi = \tau\circ\phi\circ \tau$.   Because $\tau$ maps $0$ to the Denjoy-Wolff point of $\phi$, we see $\psi$ has Denjoy-Wolff point $0$.  Hence,
  $$
  \psi(z) = \frac{az}{1-cz}
  $$
  for some constants $a$ and $c$ (and $a\ne 0$ because we are assuming $\phi$ to be nonconstant).      Because, $C_\psi$ is similar to $C_\phi$ via the automorphic composition operator $C_\tau$ and $C_\phi$ is  coposinormal, we see from Proposition~\ref{simProp} that $C_\psi$ is coposinormal.   By Theorem~\ref{sigmaphiinv}, 
  $$
    \sigma_1\circ \psi^{-1} \ \text{must be a selfmap of \D},
    $$
where $\sigma_1(z) = \bar{a}z + \bar{c}$ is the adjoint of $\psi$.  Thus,
\begin{equation}\label{cis02}
(\sigma_1\circ \psi^{-1})(\D) = \bar{a} \psi^{-1}(\D) + \bar{c} \subseteq \D. 
\end{equation}
That the preceding inclusion can hold only if $c = 0$ follows just as in the proof of Theorem~\ref{S4posi}: again we must have $|a| < 1$,  and 
$$
\psi^{-1}(z) =  \frac{z}{a + cz}
$$
has the the same form as the function $\mu$ appearing in the proof of Theorem~\ref{S4posi}, and thus if $c\ne 0$, (\ref{cis02}) cannot hold if $|a| \le |c|$ because, in this case, $\psi^{-1}$ is unbounded on $\D$, and in case $|a| >|c|$, $\psi^{-1}(\D)$ is disk of radius exceeding $1/|a| > 1$ showing that (\ref{cis02}) cannot hold.   

 Because $c = 0$, we have $\psi(z) = az = \tau\circ\phi\circ \tau$, so that 
 $$
 \phi = \tau\circ a\tau
 $$
 for some $a\in \D\setminus\{0\}$, as desired.
 \end{proof}
   
   We conclude this section by pointing out a connection between our work and that of Narayan et al.\ in \cite{narayan2016complex}.  The selfmaps of $\D$ of the form $z\mapsto az/(1-cz)$ and $z\mapsto az +c$ playing prominent roles in the proofs of Theorems~\ref{S4posi} and Theorem~\ref{S4coposi} induce complex symmetric composition operators on $H^2$. 
An operator $A$ is \textit{complex symmetric} if there exists a conjugate-linear, isometric involution $J$ such that $JAJ = A^{*}$.  It was shown in \cite[Main Theorem]{narayan2016complex} that if $\ph(z)=\frac{az}{1-cz}$ or $\ph(z)=az+c$ are dilation-type having second fixed point of outside of $\D^-$, then $C_\phi$ is complex symmetric.    The following general result may be of independent interest.
\begin{proposition}\label{cso}
Suppose $A$ is a bounded operator on a Hilbert space that is complex symmetric. Then $A$ is posinormal if and only if it is coposinormal.  \end{proposition}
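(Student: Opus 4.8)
The plan is to reduce everything to the range-inclusion characterization of posinormality furnished by part (2) of Theorem~\ref{posinormal}. By that result $A$ is posinormal precisely when $\ran A \subseteq \ran A^*$, and, since coposinormality of $A$ is by definition posinormality of $A^*$, $A$ is coposinormal precisely when $\ran A^* \subseteq \ran A$. So the entire task is to show that, for a complex symmetric $A$, the inclusion $\ran A \subseteq \ran A^*$ holds if and only if $\ran A^* \subseteq \ran A$. Note that Theorem~\ref{posinormal}(2) concerns the literal ranges, not their closures, so I will work with exact ranges throughout.

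The engine of the argument is the set identity $\ran A^* = J(\ran A)$. Complex symmetry gives $A^* = JAJ$, and since $J$ is an involution ($J^2 = I$) one also gets $A = JA^*J$. Because $J$ is a conjugate-linear isometric involution, it is in particular a bijection of the Hilbert space $\mathcal{H}$ onto itself, so as $y$ runs over $\mathcal{H}$ so does $Jy$; hence $\{A^*y : y \in \mathcal{H}\} = \{JA(Jy) : y \in \mathcal{H}\} = J(\ran A)$, which is the claimed identity. Here $J$ acts only set-theoretically, so no closures intervene and the identity is between the exact ranges.

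With this identity in hand the equivalence is immediate. Posinormality reads $\ran A \subseteq J(\ran A)$; applying the bijection $J$ to both sides preserves the inclusion, and using $J^2 = I$ converts it into $J(\ran A) \subseteq \ran A$, i.e. $\ran A^* \subseteq \ran A$, which is exactly coposinormality. Applying $J$ once more runs the implication in reverse, so the two inclusions are equivalent and the proposition follows. I do not anticipate a genuine obstacle: the only points demanding care are confirming that $J$ is a bijection (so that $\ran(JAJ) = J(\ran A)$, which is where $J^2 = I$ is used) and checking that the whole argument goes through with literal rather than closed ranges, so that Theorem~\ref{posinormal}(2) applies directly. The conjugate-linearity and isometry of $J$ enter only insofar as they guarantee that $J$ is an involutive bijection.
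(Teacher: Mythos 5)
Your proof is correct, but it takes a genuinely different route from the paper's. The paper invokes criterion (4) of Theorem~\ref{posinormal}: from posinormality it takes a bounded $T$ with $A = A^*T$, conjugates by $J$ using $A^* = JAJ$ to obtain $A^* = A(JTJ)$, and notes that the two factors of the conjugate-linear $J$ make $JTJ$ a bounded \emph{linear} operator, so criterion (4) yields coposinormality; the converse then follows by applying the same argument to $A^*$, which is complex symmetric with the same $J$. You instead work with criterion (2), the range inclusion, via the set identity $\ran\, A^* = J(\ran\, A)$, so that posinormality reads $\ran\, A \subseteq J(\ran\, A)$ and one application of the involution $J$ flips it to $J(\ran\, A) \subseteq \ran\, A$, i.e.\ coposinormality, and symmetrically back. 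Your version buys two things: it sidesteps entirely the (minor but real) point the paper must address about conjugate-linearity cancelling, since $J$ acts only set-theoretically on ranges; and it isolates exactly what is used --- only that $J$ is an involutive bijection intertwining $A$ and $A^*$, not its isometry or antilinearity --- while also showing the slightly stronger conclusion that $\ran\, A = \ran\, A^*$ in this situation. What the paper's route buys is an explicit operator factorization $A^* = A T_1$ with $T_1 = JTJ$ inheriting the norm bound of $T$, which is more constructive. (Incidentally, the paper's displayed definition ``$T_1 := JAJ$'' is a typo for $JTJ$; your approach never meets that issue.)
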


\begin{proof}
 Suppose that $C_\phi$ is posinormal.  By Theorem \ref{posinormal}, there exists a bounded operator $T$ so that $A = A^{*}T$. By complex symmetry, there exists a conjugate-linear, isometric involution $J$ so that $AJ = JA^{*}$. Then, $A^{*}= JAJ = JA^{*}TJ = AJTJ$; that is, $A^{*} = AJTJ$.  Note that the presence of two factors of the conjugate-linear operator $J$, makes $T_1:=JAJ$ a linear operator such that $A^* = AT_1$; so, by Theorem \ref{posinormal}, $A^{*}$ is posinormal, i.e., $A$ is coposinormal. We just showed that the adjoint of a posinormal operator is posinormal; thus, if $A^*$ is posinormal, then $A=(A^*)^*$ is posinormal; i.e., $A$ coposinormal implies $A$ is posinormal.  
 \end{proof}
 
 We have shown that if the complex symmetric composition operator $C_{az/(1-cz)}$ or $C_{az + c}$ is posinormal  then it's actually normal (i.e., $c$ must be $0$).

\section{Dilation Type II: Interior and Boundary Fixed Points }

Our final scenario to consider is when the Denjoy-Wolff point $w$ of \ph\ belongs to \D, but the second fixed point of \ph\ is on the boundary of \D.  These symbols are adjoints  of the hyperbolic-type selfmaps of Section 3.  Since hyperbolic selfmaps induced coposinormal composition operators, it is not too surprising that adjoints of these maps induce posinormal composition operators provided we assume the symbol vanishes at some point in $\D$. 

\begin{theorem}\label{int_brd}
Suppose \ph\ is a linear-fractional selfmap of \D\ with one fixed point in \D\ and one on $\partial \D$. Then $C_\phi:H^2\rightarrow H^2$ is posinormal if and only if $\ph(\beta) =0 $ for some $\beta \in \D$.
\end{theorem}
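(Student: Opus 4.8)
The forward implication requires no new work: if $C_\phi$ is posinormal, then $\phi$ must vanish at a point of $\D$ by Corollary~\ref{zeroC}. The plan is therefore to prove the converse, assuming $\phi(\beta)=0$ for some $\beta\in\D$ and deducing posinormality. By Theorem~\ref{phisigmainv}(b), it suffices to show that $\phi\circ\sigma^{-1}$ is a selfmap of $\D$, where $\sigma$ is the adjoint of $\phi$; the vanishing hypothesis then supplies the remaining ingredient of that theorem.

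The key observation is that the adjoint $\sigma$ is itself a hyperbolic-type selfmap of $\D$, so that the coposinormality of $C_\sigma$ already established in Section~3 can be converted into exactly the selfmap statement we need. First I would locate the fixed points of $\sigma$. A direct computation with Cowen's formula (Theorem~\ref{adjformula}) shows that $z_0$ is a fixed point of $\phi$ if and only if $1/\overline{z_0}$ is a fixed point of $\sigma$; that is, the fixed points of $\sigma$ are the reflections across $\partial\D$ of those of $\phi$. Since $\phi$ has one fixed point $w\in\D$ and one fixed point $\zeta\in\partial\D$, the map $\sigma$ has fixed points $1/\overline{w}$ (which lies outside $\overline{\D}$, as $|w|<1$) and $1/\overline{\zeta}=\zeta\in\partial\D$. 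Because $\sigma$ is a selfmap of $\D$ whose only fixed point in $\overline{\D}$ is the boundary point $\zeta$, its Denjoy-Wolff point is $\zeta$; having two distinct fixed points, $\sigma$ is not parabolic, so $\sigma'(\zeta)<1$ and $\sigma$ is a hyperbolic-type nonautomorphism. Theorem~\ref{dl1coposi} then gives that $C_\sigma$ is coposinormal.

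To finish, I would use that the adjoint operation on linear-fractional selfmaps is an involution, so that the adjoint of $\sigma$ is $\phi$ itself. This can be verified from the identity $\sigma=\phi_e^{-1}$ (with $\phi_e(z)=1/\overline{\phi(1/\overline{z})}$) appearing in the proof of Theorem~\ref{sigmaphiinv}, together with the facts that $\phi\mapsto\phi_e$ is an involution and commutes with inversion. Applying Theorem~\ref{sigmaphiinv} to $\sigma$ in place of $\phi$, the coposinormality of $C_\sigma$ is equivalent to $(\text{adjoint of }\sigma)\circ\sigma^{-1}=\phi\circ\sigma^{-1}$ being a selfmap of $\D$. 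This is precisely the hypothesis of Theorem~\ref{phisigmainv}(b); since $\phi$ also vanishes at $\beta\in\D$, that theorem yields the posinormality of $C_\phi$, completing the converse.

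The only genuinely delicate points are the two structural claims feeding the argument: that $\sigma$ is hyperbolic rather than parabolic (handled by the distinctness of its two fixed points) and that $\phi$ is recovered as the adjoint of $\sigma$ (the involutivity of the adjoint construction). Both are short, and once they are in place the conclusion is immediate from Theorems~\ref{dl1coposi}, \ref{sigmaphiinv}, and \ref{phisigmainv}. I expect the involutivity step to be the main thing to get right, since the entire reduction to the hyperbolic analysis of Section~3 hinges on $\phi$ being the adjoint of its own adjoint.
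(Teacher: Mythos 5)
Your proof is correct, but it takes a genuinely different route from the paper's. The paper proves the converse by a direct computation in the right halfplane: writing $\Phi(z)=sz+r$ with $0\le s<1$ and $\Re(r)>0$ for the halfplane incarnation of $\phi$, it checks by hand that $\Phi\circ\Sigma^{-1}(z)=s^2z-s\bar r+r$ maps the halfplane into itself, and then invokes Theorem~\ref{phisigmainv}(b). You instead run a duality argument: the reflection principle for fixed points shows $\sigma$ is a hyperbolic-type nonautomorphism, Theorem~\ref{dl1coposi} gives coposinormality of $C_\sigma$, and the involutivity of the adjoint operation (the adjoint of $\sigma$ is $\phi$, which one can also read off directly from Cowen's formula in Theorem~\ref{adjformula}: applying it to $\sigma(z)=(\bar a z-\bar c)/(-\bar b z+\bar d)$ returns $(az+b)/(cz+d)$) converts that, via Theorem~\ref{sigmaphiinv}, into exactly the statement that $\phi\circ\sigma^{-1}$ is a selfmap of $\D$. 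Both of your ``delicate points'' check out, including the edge case $w=0$, where $1/\overline{w}=\infty$ is still outside $\D^-$ in $\CP_\infty$, and the observation that a non-identity linear-fractional map with two distinct fixed points cannot have derivative $1$ at either, so $\sigma'(\zeta)<1$. What your approach buys is that it makes rigorous the heuristic the paper only gestures at (``these symbols are adjoints of the hyperbolic-type selfmaps of Section 3, so it is not too surprising\dots'') and avoids repeating the halfplane computation; what the paper's approach buys is self-containedness and the explicit formula for $\Phi\circ\Sigma^{-1}$, at the cost of redoing essentially the same algebra that already appears in the proof of Theorem~\ref{dl1coposi}. Unwinding your citations, the underlying computation is literally the same one, so the two proofs agree at the computational core while differing in organization.
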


\begin{proof}
By Proposition \ref{zero}, if  \C\ is posinormal, then $\ph$ must vanish at a point of $\D$. 

 If $\phi$ is an automorphism, then $C_\phi$ is invertible, so that $C_\phi$ is posinormal. 
 
Suppose that $\phi$, not an automorphism, vanishes at some $\beta\in \D$.  We again assume, without loss of generality, that the fixed point of \ph\ on $\partial \D$ is $1$.  Just as in the proof of Theorem~\ref{HCnonposi}, the right  halfplane incarnation of $\phi$, $\nu\circ\phi\circ \nu^{-1}$, where $\nu = (1+z)/(1-z)$ is given by
$$
\Phi(z) = sz + r,
$$
but now $0 \le s < 1$ and $\Re(r) > 0$ (strict inequality because we are assuming $\phi$ is not an automorphism).  If $\sigma$ is the adjoint of $\phi$, then just as before,
$$
 \Sigma(z) = \frac{1}{s}z + \frac{\bar{r}}{s} \quad \text{and} \quad \Sigma^{-1}(z) =     sz - \bar{r},
$$
where $\Sigma = \nu\circ\sigma\circ \nu^{-1}$.
Observe that 
$$
(\Phi\circ \Sigma^{-1})(z) = s^2z -s\bar{r} + r
$$
 is a selfmap of the right halfplane since $\Re(-s\bar{r} + r) = (1-s)\Re(r) > 0$ because $s < 1$.
It follows that $\phi\circ\sigma^{-1}$ is a selfmap of $\D$. Since $\phi$ is assumed to vanish at a point of $\D$, we conclude that $C_\phi$ is posinormal by Theorem~\ref{phisigmainv}.
\end{proof}

In \cite{Cow}, Cowen characterizes all hyponormal composition operators on $H^2$ having linear-fractional symbol:  $C_\phi$ is hyponormal if and only if  either (i) $\phi(z) = \alpha z$ for some $\alpha \in \D^-$, so that $C_\phi$ is normal, or (ii) $\phi(z) = \frac{sz}{1-(1-s)\bar{\eta}z}$ where $0 < s< 1$ and $|\eta| = 1$, so that $\phi$ has Denjoy-Wolff point $0$ and it second fixed point $\eta$ on the unit circle.  Cowen also shows that the hyponormal composition operators induced by selfmaps of $\D$ given by (ii) are actually subnormal and hence subnormality is equivalent to hyponormality for composition operators having linear-fractional symbol.  Here we show hyponormality is equivalent to posinormality with symbol vanishing at $0$.
\begin{theorem}\label{posi_hypo}
Suppose \ph\ is a linear-fractional selfmap of \D\ (and we allow $\phi$ to be a constant function).  The following are equivalent:
\begin{itemize}
\item[(a)] $C_\phi$ is subnormal;
\item[(b)] $C_\phi$ is hyponormal;
\item[(c)] $C_\phi$ is posinormal and $\phi(0) = 0$;
\item[(d)]  either (i) $\phi(z) = \alpha z$ for some $\alpha \in \D^-$, so that $C_\phi$ is normal, or (ii) $\phi(z) = \dfrac{sz}{1-(1-s)\bar{\eta}z}$ where $0 < s< 1$ and $|\eta| = 1$, so that $\phi$ has Denjoy-Wolff point $0$ and it second fixed point $\eta$ lies on the unit circle. 
\end{itemize}
\end{theorem}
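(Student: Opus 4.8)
The plan is to close the logical cycle $(d)\Rightarrow(a)\Rightarrow(b)\Rightarrow(c)\Rightarrow(d)$, exploiting that the equivalence $(a)\Leftrightarrow(b)\Leftrightarrow(d)$ is already Cowen's hyponormality characterization (Theorem~5 of \cite{Cow}, together with the subnormality of the maps in (d) established in \cite{Cow} and \cite{cowenkrieteSubN}). Thus $(d)\Rightarrow(a)$ and $(a)\Rightarrow(b)$ (subnormal operators are hyponormal) require no new work, and all the genuinely new content lies in inserting condition (c)---that is, in proving $(b)\Rightarrow(c)$ and $(c)\Rightarrow(d)$.

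For $(b)\Rightarrow(c)$ I would argue straight from the definitions: every hyponormal operator satisfies $AA^*\le A^*A$, hence is posinormal by part (3) of Theorem~\ref{posinormal} with $\lambda=1$; and hyponormality of a composition operator forces $\ph(0)=0$ by \cite[Theorem 2]{cowenkrieteSubN}. So $C_\ph$ hyponormal yields both halves of (c) at once.

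The substance is $(c)\Rightarrow(d)$. Assume $C_\ph$ is posinormal with $\ph(0)=0$. Then $0\in\D$ is a fixed point, which rules out the parabolic and hyperbolic types (whose fixed points avoid $\D$) and leaves three possibilities. If $\ph$ is constant it must be $\ph\equiv 0$, which is (d)(i) with $\alpha=0$; if $\ph$ is an elliptic automorphism fixing $0$ then $\ph(z)=\lambda z$ with $|\lambda|=1$, again (d)(i). Otherwise $\ph$ is a dilation-type nonautomorphism with Denjoy--Wolff point $0$ and a single second fixed point $p\notin\D$ (a second interior fixed point is excluded by Schwarz--Pick), and I split on the location of $p$. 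If $p$ lies outside $\D^-$, I apply Theorem~\ref{S4posi}: with $w=0$ the automorphism there is $\tau(z)=-z$, so $\ph=\tau\circ\alpha\tau$ collapses to $\ph(z)=\alpha z$ with $0<|\alpha|<1$, i.e.\ (d)(i). If instead $p=\eta\in\partial\D$, posinormality is automatic by Theorem~\ref{int_brd} (since $\ph(0)=0$), and the only task is to extract the precise form: writing $\ph(z)=az/(cz+1)$ (the numerator's constant term vanishes because $\ph(0)=0$) and imposing $\ph(\eta)=\eta$ gives $c=(a-1)\bar\eta$, whence $\ph(z)=\dfrac{sz}{1-(1-s)\bar\eta z}$ with $s=\ph'(0)$.

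The main obstacle is this last case---forcing $s$ to be real with $0<s<1$, so that the form is genuinely (d)(ii). I would obtain it from the fact that $\eta$ is the non--Denjoy--Wolff (repelling) boundary fixed point: by Julia--Wolff--Carath\'eodory the angular derivative of a selfmap of $\D$ at a boundary fixed point is a positive real number, and a direct computation gives $\ph'(\eta)=1/s$, so $s$ is positive real; Schwarz's lemma at the interior fixed point $0$ then gives $|s|=|\ph'(0)|<1$, hence $0<s<1$. Equivalently, and in the spirit of the earlier sections, after rotating $\eta$ to $1$ the right-halfplane incarnation of $\ph$ is $w\mapsto sw+(1-s)$, and requiring this to carry $\HP$ into itself forces $\Im(s)=0$ and $0<s<1$ (if $\Im(s)\ne 0$ the real part is unbounded below on $\HP$). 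Either route completes (d)(ii) and hence the cycle; the only delicate bookkeeping is ensuring the degenerate constant and elliptic-automorphism cases are correctly absorbed into (d)(i) with $|\alpha|\le 1$.
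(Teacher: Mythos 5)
Your proof is correct and takes essentially the same route as the paper: the same cycle $(d)\Rightarrow(a)\Rightarrow(b)\Rightarrow(c)\Rightarrow(d)$ with the new content concentrated in $(c)\Rightarrow(d)$, handled by the same case split on the second fixed point (Theorem~\ref{S4posi} when it lies outside $\D^-$, and the right half-plane incarnation $w\mapsto sw+(1-s)$ when it lies on $\partial\D$). Your alternative Julia--Wolff--Carath\'eodory argument for forcing $0<s<1$ is a valid variant of the paper's half-plane computation, but the substance of the two proofs is the same.
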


\begin{proof}  If $\phi$ is a constant function and $C_\phi$ is posinormal, we've seen (Proposition~\ref{ConstantSymbol}) that $\phi$ takes the form (i) of $(d)$ with $\alpha = 0$ so that $C_\phi$ is normal (in fact, self-adjoint) and all the equivalences of the theorem hold.  For the remainder of the proof, we assume $\phi$ is nonconstant.  

That $(a)\implies(b)$  is true in general.   Also hyponormality implies posinormality in general (with the identity operator being an interrupter).  Moreover, as we have noted, if $C_\phi: H^2\rightarrow H^2$ is hyponormal, then $\phi(0) = 0$ (\cite[Theorem 2]{cowenkrieteSubN}). Thus, we see $(b)\implies (c)$. 

Suppose $(c)$ holds.  Then, either $\phi$ is an elliptic automorphism: $\phi = \alpha z$ for some $\alpha\in \partial \D$ and is described by (i), or  $\ph$ has Denjoy-Wolff point $0$.   Suppose $\phi$ has Denjoy-Wolff point $0$.  If its second fixed point lies outside $\D^-$, then, because we are assuming $C_\phi$ is posinormal, Theorem~\ref{S4posi} tells us that $\phi = \tau\circ \alpha \tau$ where $\tau(z) = z$ so that $\phi(z) = \alpha z$ where $0 < |\alpha| < 1$; thus,  $\phi$ is, once again, described by (i).  Suppose that $\phi$'s second fixed point $\eta$ lies on the unit circle. Then $\psi(z) : = \bar{\eta}\phi(\eta z)$ fixes $1$ as well as $0$.  The righthalfplane incarnation $\Psi$ of $\psi$, $\nu\circ\psi\circ\nu^{-1}$ where $\nu(z) = (1+z)/(1-z)$, takes the form $\Phi(z) = sz + 1-s$ where $0 <s < 1$,  because $\Phi$ must fix $1$ (since $\phi$ fixes $0$).  Thus $\psi = \nu^{-1}\circ \Psi\circ \nu$ is given by
$$
\psi(z) = \frac{sz}{1-(1-s)z} \  \text{and}\  \phi(z) = \eta\psi(\bar{\eta}z) = \frac{sz}{1-(1-s)\bar{\eta}z} \ \text{is described by (ii)}.
$$
We have shown that $(c)\implies (d)$.

If $(d)$ holds and $\phi$ is described by (i), then $C_\phi$ is normal hence subnormal,  If (d) holds and $\phi$ is described by (ii), then \cite[Theorem 5]{Cow} establishes that $C_\phi$ is subnormal.  Thus $(d)\implies(a)$, which completes the proof. 
\end{proof}

Hyperbolic-type composition operators (treated in Section 3) were never posinormal; so, given we are now working with symbols that are adjoints of those of Section 3, we have reason to believe they should never induce coposinormal composition operators. That this is the case is a quick corollary of Theorem~\ref{sigmaphiinv}.

\begin{theorem}
Suppose \ph\ is a nonautomorphic linear-fractional selfmap of \D\ with one fixed point in \D\ and another on $\partial \D$. Then \C\ is not coposinormal on $H^2$.
\end{theorem}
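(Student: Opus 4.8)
The plan is to invoke the coposinormality criterion of Theorem~\ref{sigmaphiinv}: letting $\sigma$ denote the adjoint of $\phi$, it suffices to show that $\sigma\circ\phi^{-1}$ fails to be a selfmap of $\D$. I would exploit the duality between this section's symbols and the hyperbolic-type symbols of Section~3, which rests on two facts about the adjoint operation $\phi\mapsto\sigma$, $\sigma(z)=1/\overline{\phi^{-1}(1/\bar z)}$. First, the operation is an involution: a direct check with Cowen's formula (Theorem~\ref{adjformula}) shows the adjoint of $\sigma$ is again $\phi$. Second, it reflects fixed points through the unit circle: if $\zeta$ is a fixed point of $\phi$, then $1/\bar\zeta$ is a fixed point of $\sigma$.

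Using the fixed-point reflection, I would argue that $\sigma$ is a hyperbolic-type nonautomorphic selfmap. Indeed, $\phi$ has its Denjoy-Wolff point $w$ in $\D$ and its second fixed point $\eta$ on $\partial\D$; hence $\sigma$ has fixed points $1/\bar w$ (lying outside $\D^-$ since $|w|<1$) and $1/\bar\eta=\eta$ (on $\partial\D$). Having two distinct fixed points, $\sigma$ is not parabolic; having no fixed point in the open disk, its Denjoy-Wolff point is the boundary point $\eta$, so $\sigma'(\eta)<1$, and since its second fixed point lies outside $\D^-$ rather than on $\partial\D$, $\sigma$ is not an automorphism. Thus $\sigma$ satisfies the hypotheses of Theorem~\ref{HCnonposi}.

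Now comes the key observation: by the involution, $\phi$ is precisely the adjoint of $\sigma$, so the composite $\sigma\circ\phi^{-1}$ is exactly the map that the proof of Theorem~\ref{HCnonposi}, applied to the hyperbolic-type symbol $\sigma$ in place of $\phi$, shows is not a selfmap of $\D$. (There, for a hyperbolic symbol $\psi$ with adjoint $\hat\psi$, one establishes that $\psi\circ\hat\psi^{-1}$ is not a selfmap of $\D$; taking $\psi=\sigma$ forces $\hat\psi=\phi$.) Combining this with Theorem~\ref{sigmaphiinv} yields that $C_\phi$ is not coposinormal.

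Alternatively, and this avoids the adjoint bookkeeping, I could repeat the halfplane computation of Theorem~\ref{int_brd} directly: normalizing the boundary fixed point to $1$, one has $\Phi:=\nu\circ\phi\circ\nu^{-1}$ with $\Phi(z)=sz+r$ where $0<s<1$ and $\Re(r)>0$, and the adjoint incarnation $\Sigma(z)=\frac1s z+\frac{\bar r}{s}$. A short computation gives $(\Sigma\circ\Phi^{-1})(z)=\frac{1}{s^2}z+\frac{\bar r s-r}{s^2}$, whose additive constant has real part $\frac{(s-1)\Re(r)}{s^2}<0$; since $1/s^2>0$, this affine map sends points of $\HP$ near the imaginary axis out of $\HP$, so $\sigma\circ\phi^{-1}$ is not a selfmap of $\D$. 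The main obstacle in either route is the same, namely the bookkeeping that pins down that the relevant composite genuinely leaves $\D$ (equivalently $\HP$); the duality route front-loads this into verifying that $\sigma$ is hyperbolic nonautomorphic, while the direct route front-loads it into the sign of $\Re(\bar r s-r)$. I expect the direct computation to be the more robust path to write out in full.
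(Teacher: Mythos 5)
Your preferred route---the direct halfplane computation---is exactly the paper's proof: normalize the boundary fixed point to $1$, write $\Phi(z)=sz+r$ with $0<s<1$, $\Re(r)>0$, compute $(\Sigma\circ\Phi^{-1})(z)=\tfrac{1}{s^2}z+\tfrac{s\bar r-r}{s^2}$, and observe that the constant term has negative real part, so $\sigma\circ\phi^{-1}$ is not a selfmap of $\D$ and Theorem~\ref{sigmaphiinv} applies. Your alternative duality route is also sound and genuinely different in organization: the adjoint operation $\phi\mapsto\sigma$ is indeed an involution that reflects fixed points through $\partial\D$, so $\sigma$ is a nonautomorphic hyperbolic-type selfmap whose adjoint is $\phi$, and the computation inside the proof of Theorem~\ref{HCnonposi} (not merely its statement, since that theorem's conclusion is about posinormality of $C_\sigma$ rather than about the map $\sigma\circ\phi^{-1}$) then delivers that $\sigma\circ\phi^{-1}$ is not a selfmap of $\D$. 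That reduction buys conceptual economy and makes the posinormal/coposinormal duality between Sections 3 and 6 explicit, at the cost of verifying the involution and fixed-point-reflection facts; the paper opts for the self-contained computation, and your instinct that the direct version is the more robust one to write out matches the authors' choice.
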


\begin{proof} 
Just as in the proof of Theorem~\ref{int_brd}, the right  halfplane incarnation of $\phi$, $\nu^{-1}\circ\phi\circ \nu$, where $\nu = (1+z)/(1-z)$ is given by
$$
\Phi(z) = sz + r,
$$
where $0 < s < 1$ and $\Re(r) > 0$ (because we are assuming $\phi$ is not an automorphism).  Just as before, the right halfplane incarnation of the adjoint $\sigma$ of $\phi$ is given by $\Sigma(z) = \frac{1}{s}z + \frac{\bar{r}}{s}$.  Note $\Phi^{-1}(z) = \frac{1}{s}z - \frac{r}{s}$.  

Observe that 
$$
(\Sigma\circ\Phi^{-1})(z) = \frac{1}{s^2}-\frac{r}{s^2} + \frac{\bar{r}}{s}.
$$
 Thus $(\Sigma\circ\Phi^{-1})$ is not a selfmap of the right halfplane the since $\Re(-\frac{r}{s^2} + \frac{\bar{r}}{s}.) = \frac{\Re(r)}{s} \left(-\frac{1}{s}+1\right) < 0$ because $s < 1$.
It follows that $\sigma\circ\phi^{-1}$ is a not a selfmap of $\D$. Thus $C_\phi$ is not coposinormal by Theorem~\ref{sigmaphiinv}.

\end{proof}

The preceding theorem completes our characterization of posinormality and coposinormality for composition operators on $H^2$ having linear-fractional symbol.

\section{Further Questions}

The natural question to raise is to what extent can we characterize all composition operators on \Ht\ which are posinormal or coposinormal. Without access to simple analogs of Cowen's adjoint formula, we face a much more difficult task. General, necessary conditions for  coposinormality seem easier to obtain than those for posinormality. For example, it's easy to see that $C_\phi$ must have dense range if $C_\phi$ is coposinormal and $\phi$ is nonconstant (because in this case the range of $C_\phi$ must include the range of $C_\phi^*$, which, in turn, contains the reproducing kernels $K_\alpha$ for all $\alpha$ in the nonempty open subset $\phi(\D)$ of $\D$).  Note that when $C_\phi$ has dense range, $\phi$ is necessarily univalent. Also note that there are non-univalent, nonconstant symbols that induce posinormal composition operators; e.g., any inner function that vanishes at $0$ induces an isometric composition operator, and isometries are posinormal (in fact, subnormal). 

  One possible path forward would be to use the more complicated adjoint formulas for composition operators with rational symbols---see \cite{CowGal},  \cite{hammond2008adjoints}, and \cite{BrdShap}. Some work has also been done for irrational symbols, see \cite{gu2020adjoints}. 

In the introduction, we mentioned that there is little known about general spectral results for posinormal operators. It is possible that focusing on composition operators might suggest some results, but the posinormality-coposinormality story told in this paper suggests otherwise. The symbols determined to induce (co)posinormal composition operators in this paper give rise to a wide variety of spectral properties; see \cite[Chapter 7]{cowen1995composition}.  

\section*{Acknowledgement}

The second author would like to thank Taylor University for the sabbatical funding that led to the completion of this work. 


{\footnotesize{
    \bibliography{citations}
    \bibliographystyle{plain}
    }}

\end{document}